 \def\NZQ{\mathbb}               
 \def\NN{{\NZQ N}}
 \def\ZZ{{\NZQ Z}}
 \def\frk{\mathfrak}               
 \def\mm{{\frk m}}
 \def\nn{{\frk n}}
 \def\ab{{\mathbf a}}
 \def\bb{{\mathbf b}}
 \def\opn#1#2{\def#1{\operatorname{#2}}} 
 \opn\chara{char} \opn\length{\ell} \opn\pd{pd} \opn\rk{rk}
 \opn\projdim{proj\,dim} \opn\injdim{inj\,dim} \opn\rank{rank}
 \opn\depth{depth} \opn\grade{grade} \opn\height{height}
 \opn\embdim{emb\,dim} \opn\codim{codim}
 \opn\Tr{Tr} \opn\bigrank{big\,rank}
 \opn\superheight{superheight}\opn\lcm{lcm}
 \opn\trdeg{tr\,deg}
 \opn\reg{reg} \opn\lreg{lreg} \opn\ini{in} \opn\lpd{lpd}
 \opn\size{size} \opn\sdepth{sdepth}
 \opn\link{link}\opn\fdepth{fdepth}\opn\lex{lex}
 \opn\div{div} \opn\Div{Div} \opn\cl{cl} \opn\Cl{Cl}
 \opn\Spec{Spec} \opn\Supp{Supp} \opn\supp{supp} \opn\Sing{Sing}
 \opn\Ass{Ass} \opn\Min{Min}\opn\Mon{Mon}
 \opn\Ann{Ann} \opn\Rad{Rad} \opn\Soc{Soc}
 \opn\Im{Im} \opn\Ker{Ker} \opn\Coker{Coker} \opn\Am{Am}
 \opn\Hom{Hom} \opn\Tor{Tor} \opn\Ext{Ext} \opn\End{End}
 \opn\Aut{Aut} \opn\id{id}
 \opn\nat{nat}
 \opn\pff{pf}
 \opn\Pf{Pf} \opn\GL{GL} \opn\SL{SL} \opn\mod{mod} \opn\ord{ord}
 \opn\Gin{Gin} \opn\Hilb{Hilb}\opn\sort{sort}
 \opn\aff{aff} \opn
\opn\relint{relint} \opn\st{st}
 \opn\lk{lk} \opn\cn{cn} \opn\core{core} \opn\vol{vol}  \opn\inp{inp} \opn\nilpot{nilpot}
 \opn\link{link} \opn\star{star}\opn\lex{lex}\opn\set{set}
 \opn\width{wd}
 \opn\gr{gr}
 \def\pot#1#2{#1[\kern-0.28ex[#2]\kern-0.28ex]}
 \opn\dirlim{\underrightarrow{\lim}}
 \opn\inivlim{\underleftarrow{\lim}}
 \let\iso=\cong
 \let\to=\rightarrow
 \def\Implies{\ifmmode\Longrightarrow \else
         \unskip${}\Longrightarrow{}$\ignorespaces\fi}
 \def\implies{\ifmmode\Rightarrow \else
         \unskip${}\Rightarrow{}$\ignorespaces\fi}
 \def\iff{\ifmmode\Longleftrightarrow \else
         \unskip${}\Longleftrightarrow{}$\ignorespaces\fi}
 \newtheorem{Theorem}{Theorem}[section]
 \newtheorem{Lemma}[Theorem]{Lemma}
 \newtheorem{Corollary}[Theorem]{Corollary}
 \newtheorem{Proposition}[Theorem]{Proposition}
 \newtheorem{Conjecture}[Theorem]{Conjecture}
 \let\epsilon\varepsilon
 \let\kappa=\varkappa
 \def\qed{\ifhmode\textqed\fi
       \ifmmode\ifinner\quad\qedsymbol\else\dispqed\fi\fi}
 \def\textqed{\unskip\nobreak\penalty50
        \hskip2em\hbox{}\nobreak\hfil\qedsymbol
        \parfillskip=0pt \finalhyphendemerits=0}
 \def\dispqed{\rlap{\qquad\qedsymbol}}
 \opn\dis{dis}
 \def\pnt{{\raise0.5mm\hbox{\large\bf.}}}
 \opn\Lex{Lex}
\begin{document}
\title {On the defining equations of  the tangent cone of a numerical semigroup ring}

 \author {J\"urgen Herzog, Dumitru I.\ Stamate}

\address{J\"urgen Herzog, Fachbereich Mathematik, Universit\"at Duisburg-Essen, Campus Essen, 45117
Essen, Germany} \email{juergen.herzog@uni-essen.de}

\address{Dumitru I. Stamate, Faculty of Mathematics and Computer Science, University of Bucharest, Str. Academiei 14, Bucharest, Romania, and  \newline  \indent
Simion Stoilow Institute of Mathematics of the Romanian Academy, Research group
of the project PN-II-RU-PD-2012-3-0656, P.O.Box 1-764, Bucharest 014700, Romania}
\email{dumitru.stamate@fmi.unibuc.ro}

\dedicatory{Dedicated to  Professor Ernst Kunz on the occasion of his eightieth birthday}

 \begin{abstract}
Let $\ab = a_1 <\dots < a_r$ be a sequence of positive integers, and let $H_{\ab}$ denote the semigroup generated by $a_1, \dots, a_r$.
For an integer $k\geq 0$ we denote by $\mathbf{a}+k$ the shifted sequence $a_1 +k, \dots , a_r +k$.
Fix a field $K$. We show that for all $k \gg 0$ the  tangent cone of the semigroup ring   $K[H_{\ab+k}]$ is
Cohen--Macaulay and   that it has the same Betti numbers  as $K[H_{\ab+k}]$ itself.

As a  consequence, we show that the number of defining equations of the tangent cone of a numerical semigroup ring
is bounded by a value depending only on the width of the semigroup, where the width  of a numerical semigroup is defined to be the difference of
the largest and the smallest  element in the  minimal generating set of the semigroup.
We also provide a conjectured upper bound of the above number of equations and  we verify it in some cases.
\end{abstract}

\thanks{}
\subjclass[2010]{Primary 13A30, 16S36, 13P10; Secondary 13D02, 13H10, 13C13}

\keywords{numerical semigroup rings, tangent cones, Betti numbers}

\maketitle

\section*{Introduction}

Let $\ab = a_1 < \dots <a_r$ be a sequence of positive integers.
We denote by $\langle a_1, \dots, a_r \rangle$ (or simply by $\langle \ab \rangle$) the subsemigroup of $\NN$ generated by $a_1,\ldots, a_r$.
In other words, $\langle \ab \rangle$  consists of all linear combinations of $a_1,\ldots, a_r$ with non-negative integers.
If $H= \langle a_1, \dots, a_r \rangle$ we call $a_1,\ldots, a_r$ a {\em system of generators} of $H$.
Throughout this paper any subsemigroup $H \subset \NN$ with $0\in H$ is called a {\em numerical semigroup}.
Such a semigroup is finitely generated  and admits  a unique minimal system of generators whose cardinality we denote by $\mu(H)$.
In the literature it is often required as part of the definition of a numerical semigroup that the greatest common divisor of its generators is one.
In the context of this paper it is convenient to drop this requirement.

For any nonnegative integer $k$, we let $\ab+k$ be the shifted sequence $a_1+k, \dots, a_r+k$.
If $H$ is minimally generated by $\ab =  a_1, \dots, a_r$,   we let $H_k= \langle \ab+k \rangle$.
We refer to  $\{H_k \}_{k \in \NN}$ as the {\em shifted family} attached to $H$. Note that even if the $a_i$'s generate $H$ minimally, it may
happen that for some shift $k$ the sequence $\ab +k$  is not a minimal generating set of $H_k$.
Hence in particular,  $(H_k)_\ell$ may be different from  $H_{k+\ell}$.
For example, for $H= \langle 3,5,7 \rangle$  we have  $H_1=\langle 4,6,8 \rangle = \langle 4,6 \rangle$
  and $(H_1)_1= \langle 5, 7 \rangle$. However, $H_2= \langle 5,7,9 \rangle$.
On the other hand,  if  $H= \langle \ab \rangle $ is minimally generated by $\ab= a_1 < \dots < a_r$,
then  for all $k > a_r - 2a_1$, $H_k$ is minimally generated by  the sequence $\ab +k$.

Let $K$ be a field and   $S=K[x_1, \dots, x_r]$ be the polynomial ring over $K$ in the variables $x_1, \dots, x_r$.
Let $\ab = a_1 < \dots <a_r$ be  a sequence of positive integers, and  $\varphi: S \to K[t]$ is the $K$-algebra homomorphism
 with $\varphi(x_i)= t^{a_i}$ for $i=1,\dots, r$, where $K[t]$ is the polynomial ring over $K$ in the variable $t$.
If we let $H= \langle a_1, \dots, a_r \rangle$, then the image of $\varphi$ is the semigroup ring $K[H]$, namely the $K$-subalgebra of $K[t]$ generated by
$t^{a_1}, \dots,t^{a_r}$ over $K$.
We denote the kernel of $\varphi$ by $I(\ab)$.
In the case when $\ab$ is a minimal system of generators of $H$, the ideal $I(\ab)$ only depends on $H$ and we set $I_H= I(\ab)$.

It is known from \cite{He-semi} that the minimal number of  generators $\mu(I_H)$ of $I_H$ is at most $3$  if $r\leq 3$.
On the other hand, even for $r=4$, the number $\mu(I_H)$ may be arbitrarily large, see \cite{Bres}. The more it is surprising that
for any numerical semigroup $H$   there exists an upper bound for the numbers $\mu(I_{H_k})$ independent of $k$, see \cite{Vu}.
This statement was conjectured by H.~Srinivasan and the first author of this paper. It was first proved by
P.~Gimenez, I.~Sengupta and H.~Srinivasan in \cite{GSS}  for numerical semigroups generated by an arithmetic sequence.
This  conjecture and some stronger versions of it have recently been proved in full generality by T. Vu in \cite{Vu}:

\begin{Theorem}{\em (Vu, \cite[Theorem 1.1]{Vu}) }
\label{thm:vu-periodicity}
Let  $\ab = a_1 < \dots <a_r$ be  a sequence of positive integers.
Then the Betti numbers of $I(\ab+k)$ are eventually periodic in $k$ with period $a_r-a_1$.
\end{Theorem}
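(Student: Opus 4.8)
The plan is to translate the assertion into the combinatorics of the semigroups $H_{\ab+k}$ and to prove the periodicity directly for the squarefree divisor complexes computing the multigraded minimal free resolution. Write $S=K[x_1,\dots,x_r]$ with $\deg x_i=a_i+k$, so that $K[H_{\ab+k}]=S/I(\ab+k)$, and recall the description of the multigraded Betti numbers of a numerical semigroup ring by squarefree divisor complexes: for $n\in\NN$,
\[
\beta_{i,n}\bigl(K[H_{\ab+k}]\bigr)=\dim_K\widetilde H_{i-1}\bigl(\Delta^{(k)}_n;K\bigr),\qquad
\Delta^{(k)}_n=\Bigl\{\,F\subseteq[r]\ :\ n-\textstyle\sum_{j\in F}(a_j+k)\in H_{\ab+k}\,\Bigr\},
\]
whence $\beta_i\bigl(K[H_{\ab+k}]\bigr)=\sum_{n\in\NN}\dim_K\widetilde H_{i-1}\bigl(\Delta^{(k)}_n;K\bigr)$. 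For $k>a_r-2a_1$ the sequence $\ab+k$ minimally generates $H_{\ab+k}$ (see the Introduction), so there $I(\ab+k)=I_{H_{\ab+k}}$; since $\beta_i\bigl(I(\ab+k)\bigr)=\beta_{i+1}\bigl(K[H_{\ab+k}]\bigr)$, it suffices to prove that for each fixed $i$ the integer $\sum_n\dim_K\widetilde H_{i-1}\bigl(\Delta^{(k)}_n;K\bigr)$ is eventually periodic in $k$ with period $d:=a_r-a_1$.

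Next I would put the family in a normal form. Set $m=a_1+k$ and $c_j=a_j-a_1$, so $0=c_1<c_2<\dots<c_r=d$ is a fixed sequence and $H_{\ab+k}=\langle m+c_1,\dots,m+c_r\rangle$; up to this reindexing the shifted family is a single family in $m$, and incrementing $k$ by $d$ amounts to incrementing $m$ by $d$. The decisive numerical coincidence is that the smallest generator of $H_{\ab+k+d}$ is $a_1+k+d=a_r+k$, i.e.\ exactly the largest generator of $H_{\ab+k}$; this is what pins the period to $d$. Writing $n=m|e|+\langle c,e\rangle$ for $e\in\NN^r$ (with $c=(c_1,\dots,c_r)$), the fiber $\{e:\ m|e|+\langle c,e\rangle=n\}$ is nonempty only when $\lceil n/(m+d)\rceil\le|e|\le\lfloor n/m\rfloor$, an interval of length $\sim nd/m^2$; hence the degrees $n$ carrying nonzero homology need not stay bounded as $m\to\infty$ — already for $r=2$ the single relation of the ring sits in degree $\Theta(m^2)$ — so the periodicity can hold only after a $k$-dependent affine shift of degrees.

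The core of the argument, and the step I expect to be the main obstacle, is then to construct, for $k\gg0$, a bijection $\Phi$ from $\{n:\widetilde H_\bullet(\Delta^{(k)}_n;K)\ne0\}$ onto $\{n':\widetilde H_\bullet(\Delta^{(k+d)}_{n'};K)\ne0\}$, of the form $n\mapsto n+\ell(n)$ with $\ell(n)$ an integer that depends affinely on $k$ and only on the combinatorial shape of the fiber over $n$, together with isomorphisms of simplicial complexes $\Delta^{(k)}_n\cong\Delta^{(k+d)}_{\Phi(n)}$. This should be extracted from a uniform description, valid for $k\gg0$, of the Apéry set $\operatorname{Ap}(H_{\ab+k};a_1+k)$ along the following lines: it equals $\{\,m\lambda_m(\rho)+\sigma_m(\rho):0\le\rho<m\,\}$, where $\sigma_m(\rho)\in\{\rho,\rho+m\}$ according to whether $\rho$ lies in the fixed semigroup $\langle c_2,\dots,c_r\rangle$ and $\lambda_m(\rho)$ is the least number of summands from $\{c_2,\dots,c_r\}$ representing $\sigma_m(\rho)$; one then checks that the passage from $m$ to $m+d$ transforms $\sigma_m,\lambda_m$, and therefore every fiber and every $\Delta^{(k)}_n$, in a manner that is itself $d$-periodic, the identification of generators being furnished by the coincidence noted above. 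The delicate points are the uniform control of the finitely many ``transition'' degrees $n$ at which $\Delta^{(k)}_n$ ceases to be a full simplex, and the simultaneous handling of the several (a priori unbounded in number) values of $|e|$ contributing to a single fiber; establishing that $\Phi$ is well defined and bijective is tantamount to showing that the entire resolution-theoretic combinatorics of $H_{\ab+k}$ and $H_{\ab+k+d}$ coincide.

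Granting such a $\Phi$, one has $\widetilde H_\bullet(\Delta^{(k)}_n;K)\cong\widetilde H_\bullet(\Delta^{(k+d)}_{\Phi(n)};K)$ for every homologically relevant $n$, while all other degrees contribute $0$ on both sides; summing gives $\beta_i\bigl(K[H_{\ab+k}]\bigr)=\beta_i\bigl(K[H_{\ab+k+d}]\bigr)$ for all $i$ and all $k\gg0$, and hence, after the index shift recalled above, the Betti numbers of $I(\ab+k)$ are eventually periodic in $k$ with period $d$.
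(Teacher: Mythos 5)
The paper does not prove this statement; it quotes it from Vu \cite{Vu}, so the only question is whether your argument stands on its own, and it does not. Everything up to the normal form ($m=a_1+k$, $c_j=a_j-a_1$, the coincidence $a_1+k+d=a_r+k$, the reduction to squarefree divisor complexes) is correct but is pure bookkeeping. The entire content of the theorem is concentrated in the step you yourself flag as ``the main obstacle'': the existence of a bijection $\Phi\colon n\mapsto n+\ell(n)$ between the homologically nontrivial degrees together with isomorphisms $\Delta^{(k)}_n\cong\Delta^{(k+d)}_{\Phi(n)}$. You never define $\ell(n)$, never exhibit a map on faces, and never argue that homology is preserved; indeed you concede that establishing $\Phi$ ``is tantamount to showing that the entire resolution-theoretic combinatorics of $H_{\ab+k}$ and $H_{\ab+k+d}$ coincide,'' which is a restatement of the theorem, not a proof of it.

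Moreover, the one concrete auxiliary claim you do make --- that $\operatorname{Ap}(H_{\ab+k};a_1+k)$ has the normal form $m\lambda_m(\rho)+\sigma_m(\rho)$ with $\sigma_m(\rho)\in\{\rho,\rho+m\}$ --- is false in general. Take $H=\langle m,m+3,m+6\rangle$ with $m\equiv 1\pmod{3}$: every element of $\langle c_2,c_3\rangle=\langle 3,6\rangle$ is divisible by $3$, so for $\rho\equiv 1\pmod{3}$ one needs $\sigma_m(\rho)=\rho+tm$ with $t\equiv 2\pmod{3}$, i.e.\ $\sigma_m(\rho)=\rho+2m$. So the ``uniform description'' on which $\Phi$ was to be built is already wrong as stated, and the genuinely hard analysis (transition degrees, unboundedly many values of $|e|$ in a fiber) is untouched. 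For comparison, Vu's actual argument does not match divisor complexes degree by degree: for $k\gg 0$ he isolates the subideal generated by the standard-graded homogeneous binomials of $I(\ab+k)$, which is independent of $k$, shows the remaining minimal generators all have the shape $x_1^\alpha u-vx_r^\beta$ quoted in Theorem \ref{thm:vu-equation}, and derives a splitting of the Betti numbers into a $k$-independent part and a part depending only on $k\bmod(a_r-a_1)$. Some structural decomposition of that kind, rather than a degreewise bijection of complexes, is what would be needed to close your gap.
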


In this paper we consider the coordinate ring of the tangent cone of $K[H]$, which is nothing but the associated graded ring
$\gr_\mm K[H]$ of $K[H]$ with respect to the maximal ideal $\mm= (t^{a_1}, \dots, t^{a_r})$. Note that $\gr_\mm K[H] \cong S/I_{H}^*$, where
$I_{H}^*$ is the ideal of initial forms of polynomials in $I_H$.  In other words, $I_{H}^*= ( f^* | f\in I_H)$, where for each nonzero $f$,
we let $f^*$ denote  the first nonzero homogeneous component of $f$.

Even though for $r = 3$, as remarked above, $\mu(I_H) \leq 3$,  the number of generators of $I_{H}^{*}$ may be arbitrarily large.
A  family of such examples was first found by T.~Shibuta, see \cite{Goto}.
In Shibuta's family of semigroups the width is unbounded, where by the width of a numerical semigroup $H$, denoted $\width(H)$,
we mean the difference between the largest and the smallest element in the minimal generating set of $H$.
One of the results of this paper (see Corollary \ref{cor:width}) is that there is a global   upper bound for $\mu(I_{H}^*)$ for all numerical semigroups with a given width.
It turns out that this result is a simple consequence of   Vu's Theorem \ref{thm:vu-periodicity} and our following theorem.

\medskip
\noindent
{\bf Theorem ~\ref{thm:main}.}
{\it
Let $H$ be a numerical semigroup. Then there exists $k_0 \in \NN$  such that for all $k\geq k_0$,
the ideal $I_{H_k}$ is minimally generated by a standard basis and such that $\beta_i(I_{H_k})= \beta_i(I^*_{H_k})$ for all $i$.
In particular, $\gr_\mm K[H_k]$ is Cohen--Macaulay for all $k\geq k_0$.
}

\medskip
The methods used to prove that there is a uniform upper bound for $\mu(I_{H}^*)$  for all numerical semigroups with given width do  not provide any explicit bound.
However, there is some computational evidence that  ${\width(H)+1 \choose 2} $ serves as an upper bound,
and indeed this  may be  a sharp  upper bound since it is reached by numerical semigroups generated by integers of suitable intervals.
In Section~\ref{sec:conjectures} we show that this conjectured upper bound is valid for any numerical semigroup $H$ satisfying
the inequality $\mu(I^*_H)\leq \mu(I^*_{\widetilde{H}})$, where $\widetilde{H}$ denotes the  semigroup generated by all integers
in the interval spanned by the smallest and the largest generator of $H$.
In support of our conjecture we show in Proposition~\ref{prop:conj2-arithmetic} that for a numerical semigroup $H$ generated by
an arithmetic sequence one even has $\beta_i(I^*_H)\leq \beta_i(I^*_{\widetilde{H}})$, for all $i$.
In fact such inequalities may be true for any numerical semigroup.
Our results on semigroups generated by an arithmetic sequence depend essentially on the description
of the relations and of the Betti numbers of their semigroup ring as  they are given by Gimenez, Sengupta and Srinivasan \cite{GSS}.

In the final Section \ref{sec:examples} we consider several examples of families of semigroups
in support of our conjectures and describe  for each member $H$ of these classes the ideal $I^*_H$.
The first family is based on a well-known result of J.\ Sally in \cite{Sally}, where she describes the defining ideal of the tangent cone of a local Gorenstein ring satisfying $r=e+d-3$.
Here $r$ is the embedding dimension, $e$ is the multiplicity and $d$ the dimension of the ring.
We call a numerical semigroup a Sally semigroup if the data of its semigroup ring  satisfy this equation.
We show that Sally semigroups exist for any given multiplicity $e\geq 4$.
Another family that we consider is that due to H.~Bresinsky \cite{Bres}.
It is the first known family of $4$-generated numerical semigroups with the property that $\mu(I_H)$ may be arbitrarily large for members $H$ belonging to this family.
We show that the  tangent cone of each  Bresinsky semigroup ring is Cohen-Macaulay (see also  F.\ Arslan \cite{Arslan})
and that the given minimal set of generators of  its defining ideal forms a standard basis.

The other  two families considered in this section are  families of $3$-generated numerical semigroups whose members attain
arbitrarily large width, yet their behavior with respect to  $\mu(I_{H}^*)$ is very different.
For any $a>3$, the ideal $I_{H}^*$ attached to the semigroup $H=\langle a, a+1, 2a+3 \rangle$ is
generated by $\lfloor \frac{a-1}{3} \rfloor +3$ monomials.
For this family the number of generators of $I_{H}^*$ is a quasi-linear function of the width of $H$,
which tends to infinity as  $\width(H)$ tends to infinity. For $a=3b$ we recover the example of T.~Shibuta, treated  with different methods in \cite[Example 5.5]{Goto}.

On the other hand, for any  coprime integers $a,b >3$, we  have $\mu (I_{H}^*) =4$ for all  $H=\langle a, b, ab-a-b \rangle$,
though the width of the semigroups in this family may also be arbitrarily large.


\section{Numerical semigroups of bounded width}
\label{sec:bounded}
For any  nonzero polynomial $f \in S=K[x_1, \dots, x_r]$ we define its initial form $f^*$ as the homogenous component
of $f$ with the least degree and we let $\nu(f) = \deg f^*$,  called the {\em initial degree} of $f$.
For an ideal $I \subset S$ the  ideal $I^* = (f^*| f\in I, f\neq 0)$ is called the {\em initial ideal} of $I$.
Note that $I^*$ is a graded ideal of $S$.

We denote by   $\widehat{S}$  the formal power series ring $K[[x_1, \dots, x_r]]$.  For a nonzero power series $f$,  the homogeneous form  $f^*$ and $\nu(f)$
are defined similarly as for polynomials, and for an  ideal $I \subset \widehat{S}$,  we let, as before,  $I^*\subset S$ be the graded ideal  generated by all $f^*$ with $f\in I$.

Let $I$ be  an ideal in $S$ or in $\widehat{S}$. A set $f_1,\dots,f_m$ of elements of $I$ is called a {\em standard basis} for $I$ if  $I^*= (f_{1}^*,\dots,f_{m}^*)S$.

\medskip
Before giving the proof of Theorem \ref{thm:main} we need a criterion for checking whether a system of generators of
an ideal is a standard basis.

First we make the following observation:
\begin{Lemma}
\label{lemma:one}
Let $I\subset S$ be an ideal. The  polynomials $f_1, \dots, f_m \in I$ form a standard basis of $I$ if and only if they form a standard basis of $I\widehat{S}$.
\end{Lemma}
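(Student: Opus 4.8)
The statement to prove is Lemma~\ref{lemma:one}: for an ideal $I\subset S$, polynomials $f_1,\dots,f_m\in I$ form a standard basis of $I$ if and only if they form a standard basis of $I\widehat S$. First I would observe that the notion of initial form $f^*$ and initial degree $\nu(f)$ is insensitive to whether we regard $f$ as a polynomial or as a power series, so the forms $f_1^*,\dots,f_m^*$ are the same objects in either setting; hence the statement reduces to comparing the two graded ideals $(f_1^*,\dots,f_m^*)S$ with $I^*$ on the one hand and with $(I\widehat S)^*$ on the other. Since $S\subset\widehat S$ and both initial ideals live in the \emph{polynomial} ring $S$ (by definition in the excerpt, $(I\widehat S)^*\subset S$), the key point is the single equality
\[
I^* \;=\; (I\widehat S)^*
\]
as graded ideals of $S$. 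Once this is established, the lemma is immediate: $(f_1^*,\dots,f_m^*)S = I^*$ iff $(f_1^*,\dots,f_m^*)S = (I\widehat S)^*$.

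The inclusion $I^*\subseteq(I\widehat S)^*$ is trivial, since $I\subseteq I\widehat S$ and taking initial forms is monotone in this sense. For the reverse inclusion, I would argue degree by degree: it suffices to show that every homogeneous element of $(I\widehat S)^*$ of degree $d$ already lies in $I^*$. Such an element is a $K$-linear combination of forms $g^*$ with $g\in I\widehat S$ and $\nu(g)=d$ (together, possibly, with the degree-$d$ parts of $S$-multiples of lower-degree such forms, but those are absorbed once we know $I^*$ is an ideal). So the crux is: given $g\in I\widehat S$ with $\nu(g)=d$, produce a polynomial $f\in I$ with $f^*=g^*$, or at least with $f^* $ equal to $g^*$ modulo the part of $I^*$ already accounted for in lower degrees. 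Write $g=\sum_j h_j p_j$ with $h_j\in I$ (polynomials) and $p_j\in\widehat S$. Truncating each power series $p_j$ at a sufficiently high degree $N$ (larger than $d$) gives polynomials $\bar p_j$, and $f:=\sum_j h_j\bar p_j\in I$ satisfies $g-f\in\mm^{N}\widehat S$ where $\mm=(x_1,\dots,x_r)$; choosing $N>d$ forces $f^*=g^*$ since the two power series agree through degree $d$ and $g$ has a nonzero form in degree $d=\nu(g)$. This shows $g^*\in I^*$, completing the inclusion $(I\widehat S)^*\subseteq I^*$.

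The only subtlety — and the step I expect to require the most care — is the handling of linear combinations: an arbitrary element of $(I\widehat S)^*$ in degree $d$ need not be of the form $g^*$ for a single $g$, only a $K$-linear combination $\sum_k c_k g_k^*$ with each $\nu(g_k)\ge d$ (allowing cancellation of lower-degree parts). To deal with this cleanly I would either (i) note that $(I\widehat S)^*$ is by definition generated \emph{as an ideal} by the forms $g^*$, so it is enough to prove $g^*\in I^*$ for each individual $g\in I\widehat S$, which the truncation argument above does; or (ii) if cancellation among the $g_k^*$ occurs, replace the combination by a single $g=\sum_k c_k g_k\in I\widehat S$ whose initial degree has jumped to some $d'\ge d$ and whose initial form is the cancelled combination, then apply the truncation argument to this $g$. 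Approach (i) is the shortest and avoids any bookkeeping about cancellation; since the excerpt's definition of $I^*$ is precisely "the ideal generated by all $f^*$", this is legitimate and I would write the proof that way. Everything else is routine: monotonicity of initial ideals under inclusion, and the elementary fact that two power series agreeing modulo $\mm^{N}$ with $N$ exceeding their common initial degree have the same initial form.
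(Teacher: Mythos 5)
Your proof is correct, and while its engine is the same as the paper's --- truncate the power-series coefficients at a degree exceeding $\nu(g)$ so as to replace an $\widehat S$-combination by an $S$-combination with the same initial form --- you organize the argument differently, and the difference is not merely cosmetic. The paper proves the forward implication directly: for $f\in I\widehat S$ it writes $f=\sum g_if_i$ with $g_i\in\widehat S$ and truncates the $g_i$; this presupposes that the standard basis $f_1,\dots,f_m$ generates $I\widehat S$, which is true (a standard basis of $I$ always generates $I\widehat S$, though not necessarily $I$ itself in $S$: take $I=(x)\subset K[x]$ and $f_1=x+x^2$, a standard basis with $(f_1)\subsetneq I$ but $(f_1)\widehat S=I\widehat S$) but is a nontrivial fact left implicit there; the reverse implication is then dismissed as straightforward. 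You instead apply the truncation to a representation $g=\sum_j h_jp_j$ in terms of an actual finite generating set $h_1,\dots,h_s$ of $I$, for which $I\widehat S=(h_1,\dots,h_s)\widehat S$ is automatic, and you extract the symmetric statement $I^*=(I\widehat S)^*$, from which both implications of the lemma drop out at once. So your version is slightly more self-contained and isolates a cleaner intermediate fact. Your point (i) --- that it suffices to treat a single initial form $g^*$ because $(I\widehat S)^*$ is by definition the ideal generated by such forms --- is exactly the right way to dispose of the cancellation worry. Two cosmetic remarks: the paper reserves $\mm$ for the maximal ideal of $K[H]$ and writes $\nn=(x_1,\dots,x_r)$ for the irrelevant ideal of $S$; and $g-f$ in fact lies in $\nn^{N+1}\widehat S$, which only strengthens your conclusion.
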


\begin{proof}
Assume $f_1, \dots, f_m \in I$ form a standard basis of $I$. Let $0 \neq f\in I\widehat{S}$ and set $d=\nu(f)$.  We may write
$f=\sum_{i=1}^{m}{g_i f_i}$ with $g_i \in \widehat{S}$, for $i=1, \dots, m$. Since for the initial part of $f$ only the terms of small degree matter,
we have that
$$
f^* = (\sum_{i=1}^{m}{h_i f_i})^*,
$$
where $h_i$ is the polynomial in $S$ obtained as the sum of the components of $g_i$
of degree at most $d$.  As  the polynomials $f_1, \dots, f_m \in I$ form a standard basis of $I$, we get that
$f^* \in (f_{1}^*, \dots, f_{m}^*)$.

The other implication is straightforward.
\end{proof}

\begin{Lemma}
\label{lemma:crit}
Let $I$ be an ideal of $S=K[x_1, \dots, x_r]$  with $I \subset \nn=(x_1, \dots, x_r)$.
Suppose that $x_1$ is a nonzero divisor on $\widehat{S}/I\widehat{S}$.
Let $\pi: S \to \bar{S}=K[x_2, \dots, x_r]$ be the $K$-algebra homomorphism with $\pi(x_1)=0$ and
$\pi(x_i)= x_i$ for $i>1$, and set $\bar{I} = \pi(I)$.

Let $g_1, \dots, g_m$ be a standard basis of   $\bar{I}$ such that
there exist   polynomials $f_1, \dots, f_m \in I$ with $\pi(f_i)= g_i$ and
$\nu (f_i) = \nu (g_i)$, for $i= 1,\dots, m$. Then
\begin{enumerate}
\item[{\em (a)}]   $f_1, \dots, f_m$ is a standard basis of $I$;
\item[{\em (b)}] $x_1$ is regular on $\gr_\nn (S/I)$;
\item[{\em (c)}] there is an isomorphism
\begin{eqnarray}
\label{eq:iso}
\gr_\nn (S/I) / x_1 \gr_\nn (S/I) \cong \gr_{\bar{\nn}} (\bar{S} / \bar{I}),
\end{eqnarray}
 of graded $K$-algebras, where $\bar{\nn} =\pi(\nn)$.
\end{enumerate}
\end{Lemma}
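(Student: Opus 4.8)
The plan is to prove (a) first, by a Buchberger-style reduction carried out in the completion $\widehat{S}$, and then to deduce (b) and (c) from (a) together with the nonzerodivisor hypothesis. I will use the standard graded $K$-algebra identifications $\gr_{\nn}(S/I)\cong S/I^*$ and $\gr_{\bar{\nn}}(\bar{S}/\bar{I})\cong\bar{S}/\bar{I}^*$, and the fact that a standard basis of $I$ (which exists: take elements of $I$ whose initial forms generate the finitely generated ideal $I^*$) is, by Lemma~\ref{lemma:one}, also a standard basis of $I\widehat{S}$, so that $(I\widehat{S})^*=I^*$; likewise $g_1,\dots,g_m$ is a standard basis of $\bar{I}\widehat{\bar{S}}$, where $\widehat{\bar{S}}=K[[x_2,\dots,x_r]]$ is the completion of $\bar{S}$. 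First observe that, since $\pi$ is a graded $K$-algebra map and $\nu(f_i)=\nu(g_i)$, the degree-$\nu(f_i)$ component of $\pi(f_i)=g_i$ equals both $\pi(f_i^*)$ and $g_i^*$, so $\pi(f_i^*)=g_i^*$; hence, writing $J=(f_1^*,\dots,f_m^*)S\subseteq I^*$, we get $\pi(J)=(g_1^*,\dots,g_m^*)\bar{S}=\bar{I}^*$, because the $g_i$ form a standard basis of $\bar{I}$.

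The technical tool is the following reduction fact: each $0\neq g\in\bar{I}\widehat{\bar{S}}$ can be written $g=\sum_i\beta_ig_i$ with $\beta_i\in\widehat{\bar{S}}$ and $\nu(\beta_ig_i)\geq\nu(g)$ for all $i$. One proves this by repeatedly subtracting from the current residual a lift, by homogeneous forms of the appropriate degrees, of a representation of its initial form in $(g_1^*,\dots,g_m^*)$; the successive residuals have strictly increasing initial degree, hence converge to $0$ in $\widehat{\bar{S}}$, and the accumulated multipliers converge there as well.

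To prove (a) I argue by induction on $d=\nu(f)$, establishing the stronger statement that each $0\neq f\in I\widehat{S}$ with $\nu(f)=d$ has a representation $f=\sum_i\alpha_if_i$ with $\alpha_i\in\widehat{S}$ and $\nu(\alpha_if_i)\geq d$ for all $i$ (extracting degree-$d$ components then gives $f^*\in J$). If $x_1\mid f$, write $f=x_1^af_0$ with $x_1\nmid f_0$; by the nonzerodivisor hypothesis $f_0\in I\widehat{S}$, and $\nu(f_0)=d-a<d$, so the inductive hypothesis applies to $f_0$ and one multiplies its representation by $x_1^a$. If $x_1\nmid f$, then $0\neq\pi(f)\in\bar{I}\widehat{\bar{S}}$, so by the reduction fact $\pi(f)=\sum_i\beta_ig_i$ with $\nu(\beta_ig_i)\geq\nu(\pi(f))\geq d$; lifting the $\beta_i$ to $\gamma_i\in\widehat{S}$ of the same initial degree with $\pi(\gamma_i)=\beta_i$, the element $f'=f-\sum_i\gamma_if_i\in I\widehat{S}$ satisfies $\nu(f')\geq d$ and $\pi(f')=0$, i.e., $x_1\mid f'$. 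Write $f'=x_1g$ with $g\in I\widehat{S}$ (again by the hypothesis). If $\nu(g)=d-1$, the inductive hypothesis applied to $g$ finishes the proof; if $\nu(g)\geq d$, then $\nu(f')>d$ and one continues the same step (peeling off powers of $x_1$ and reducing modulo the $g_i$), the residuals that are not yet absorbed having strictly increasing initial degree --- hence converging to $0$ in $\widehat{S}$ --- while the multipliers converge, every contribution having initial degree $\geq d$. Thus $f_1,\dots,f_m$ is a standard basis of $I$, and by Lemma~\ref{lemma:one}, $I^*=J$.

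Given (a), parts (b) and (c) are short. For (c), $\gr_{\nn}(S/I)/x_1\gr_{\nn}(S/I)\cong(S/I^*)/x_1(S/I^*)=S/(I^*+x_1S)\cong\bar{S}/\pi(I^*)=\bar{S}/\bar{I}^*\cong\gr_{\bar{\nn}}(\bar{S}/\bar{I})$, all natural graded $K$-algebra isomorphisms, using $I^*=J$ and $\pi(J)=\bar{I}^*$. For (b) we must show $(I^*:x_1)=I^*$: if $z\in S$ were homogeneous with $x_1z\in I^*=J$ but $z\notin I^*$, then writing $x_1z=\sum_ip_if_i^*$ with $p_i$ homogeneous shows that $g:=\sum_ip_if_i\in I$ has initial form $x_1z$; now $x_1\nmid g$ (otherwise $g/x_1\in I\widehat{S}$ would have initial form $z$, forcing $z\in(I\widehat{S})^*=I^*$), and one reduction step exactly as in the proof of (a) replaces $g$ by $g'\in I\widehat{S}$ with $(g')^*=x_1z$ and $\pi(g')=0$, so $g'/x_1\in I\widehat{S}$ has initial form $z$ and again $z\in I^*$, a contradiction. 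The hard part is (a): forcing the reduction process to terminate or converge. This is exactly where the completeness of $\widehat{S}$ and the nonzerodivisor hypothesis on $\widehat{S}/I\widehat{S}$ are used, and where one must carefully handle the case in which a residual's initial form is divisible by $x_1$ (and so is invisible to the standard basis of $\bar{I}$) --- that case is resolved precisely by dividing out $x_1$ and dropping to lower initial degree.
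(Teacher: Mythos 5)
Your proof is correct in substance, but it takes a genuinely different route from the paper's. The paper disposes of the lemma in two lines: it passes to the $\nn$-adic completion via Lemma~\ref{lemma:one} and then invokes \cite[Theorem~1]{He-reg} for (a) and (b), and \cite[Lemma, p.~185]{He-reg} for (c). What you have written is, in effect, a self-contained proof of that cited theorem of Herzog on super-regular sequences: a convergent division algorithm in $\widehat{S}$ that alternates between reducing a residual modulo the lifts $f_i$ of the standard basis of $\bar I$ and peeling off powers of $x_1$ (legitimate because $x_1$ is a nonzerodivisor on $\widehat{S}/I\widehat{S}$). Your derivations of (b) and (c) from (a) are also different in organization --- the paper gets (c) from (b) via the cited lemma, whereas you get (c) directly from $I^*=(f_1^*,\dots,f_m^*)$ and $\pi(f_i^*)=g_i^*$, and prove (b) by a single reduction step showing $(I^*:x_1)=I^*$; both of these are clean and correct. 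The trade-off is the usual one: the paper's proof is short but opaque without the 1981 reference in hand, while yours makes the mechanism (and the precise role of each hypothesis) visible at the cost of the delicate convergence bookkeeping.

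One point in that bookkeeping deserves tightening. The assertion that the unabsorbed residuals have \emph{strictly} increasing initial degree is not literally true step by step: if the current residual $r$ (with $x_1\nmid r$) has $r^*$ congruent modulo the image of the reduction step to a nonzero multiple of $x_1$, then after reducing and peeling $x_1^e$ the quantity $c+\nu(r)$ (where $x_1^c r$ is the unabsorbed part of $f$) is unchanged, while $\nu(r)$ drops by $e$. The correct statement is that $c+\nu(r)$ is non-decreasing and can stall only finitely often in a row, because each stall strictly decreases $\nu(r)$, which is bounded below (and once $\nu(r)<d$ your induction hypothesis takes over). Hence $c+\nu(r)\to\infty$, the partial sums of the multipliers converge in $\widehat{S}$, and every contribution has initial degree at least $d$, which is all the argument needs. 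You clearly identified this as the crux in your closing remark; just replace ``strictly increasing'' by this potential-function argument and the proof is complete.
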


\begin{proof}
After passing from $S/I$ to the $\nn$-adic completion $\widehat{S}/I\widehat{S}$ we may apply \cite[Theorem 1]{He-reg} and Lemma \ref{lemma:one}.
This proves $(a)$ and $(b)$. Statement $(c)$ follows from $(b)$ and \cite[Lemma, p.~185]{He-reg}.
\end{proof}

Let $H= \langle a_1, \dots, a_r \rangle$ be a numerical semigroup minimally generated by  $a_1, \dots, a_r$.
The proof of  Theorem \ref{thm:main} depends heavily on the following result.
\begin{Theorem}{\em (Vu,  \cite[Corollary 3.7]{Vu})}
\label{thm:vu-equation}
There exists an integer $k_0$ such that for all $k \geq k_0$,   any minimal binomial inhomogeneous generator of
$I_{H_k}$ is of the form  
\begin{eqnarray}
\label{form}
x_1^\alpha u - v x_{r}^\beta,
\end{eqnarray} 
where $\alpha, \beta >0$, and where $u$ and $v$ are monomials in the variables $x_2, \dots, x_{r-1}$  with
$$
\deg x_1^\alpha u  > \deg v x_{r}^\beta.
$$
\end{Theorem}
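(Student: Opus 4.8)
The plan is to recast $I_{H_k}$ in lattice/fiber terms. Write $b_i=a_i+k$ and, for a binomial $x^{\cb}-x^{\db}\in I_{H_k}$ with $\cb,\db$ of disjoint support, record its $S$-degrees $p=\sum_i c_i$, $q=\sum_i d_i$ and its value $N=\sum_i c_i b_i=\sum_i d_i b_i$. Inhomogeneity means $p\neq q$, and after relabelling we are in the case $p>q$ of (\ref{form}). Set $w=\width(H)=a_r-a_1$, $C=\sum_i c_i(a_i-a_1)=N-p\,b_1\ge 0$ and $D=\sum_i d_i(a_r-a_i)=q\,b_r-N\ge 0$, so that $C+D=q\,b_r-p\,b_1=q a_r-p a_1-(p-q)k$. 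Rewriting the relation $\sum_i c_i b_i=\sum_i d_i b_i$ through $b_i=b_r-(a_r-a_i)$ gives $(p-q)b_r=\sum_i c_i(a_r-a_i)-D\le p\,w$, hence $p\ge (p-q)(a_r+k)/w$, and symmetrically $q\ge (p-q)(a_1+k)/w$. In particular both degrees grow linearly in $k$, so the high monomial $x^{\cb}$ carries an unbounded reservoir of small generators and the low monomial $x^{\db}$ of large ones; this growth is what ultimately makes the shape forced only for $k\gg 0$.

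The content of (\ref{form}) is exactly that $c_r=0$, $d_1=0$, $c_1>0$ and $d_r>0$; then $\alpha=c_1$, $\beta=d_r$, and $u,v$ collect the exponents of $x_2,\dots,x_{r-1}$. I would prove this via the connected-components criterion for toric ideals: for $N\in H_k$ let $\nabla_N$ be the graph whose vertices are the factorizations $\eb\in\NN^r$ with $\sum_i e_i b_i=N$ and whose edges join two factorizations sharing a variable. If $\cb$ and $\db$ lie in the same component, a path between them expresses $x^{\cb}-x^{\db}$ as a sum of binomials each divisible by a nonconstant monomial, so $x^{\cb}-x^{\db}\in\nn I_{H_k}$; hence a disjoint-support binomial can be a minimal generator only if $\cb$ and $\db$ lie in different components of $\nabla_N$. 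It therefore suffices to locate, for $k\gg 0$, the extreme vertices of the relevant components.

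The engine is a supply of $k$-independent homogeneous exchanges. For $r\ge 3$ (the case $r=2$ being immediate, with $I_{H_k}$ principal of the form $x_1^\alpha-x_r^\beta$) the homogeneous sublattice $\{\eb:\sum_i e_i=0,\ \sum_i e_i a_i=0\}$ is literally independent of $k$, has rank $r-2\ge 1$, and is contained neither in $\{e_r=0\}$ nor in $\{e_1=0\}$ (its intersection with either has rank only $r-3$). Thus it contains a bounded-degree exchange that strictly lowers the $x_r$-exponent and one that strictly lowers the $x_1$-exponent. Applying the first to $x^{\cb}$ keeps us in the same component of $\nabla_N$, since the two monomials share the complementary factor, which is nonempty as $p\to\infty$; iterating drives $x^{\cb}$ to an $x_r$-free representative of its component, and likewise $x^{\db}$ to an $x_1$-free one, forcing $c_r=0$ and $d_1=0$. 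For $c_1>0$: an $x_r$-free $x^{\cb}$ with $c_1=0$ would be supported in $\{2,\dots,r-1\}$, so $N\ge p\,b_2$ while $N\le q\,b_r$, pushing $p$ above its minimal value; this excess is then removed by a further $k$-independent trade that inserts $x_1$ in exchange for middle variables, available because the degree growth guarantees the needed reservoir. The claim $d_r>0$ is symmetric.

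To make ``$k\gg 0$'' effective I would invoke Vu's periodicity (Theorem~\ref{thm:vu-periodicity}): the Betti numbers, and hence the number and combinatorial type of the minimal generators, are eventually periodic with period $w$, so it is enough to verify the shape for finitely many residues of $k$ and, inside each stable family, to observe that raising $k$ by $w$ merely augments the $x_1$-part of $x^{\cb}$ and the $x_r$-part of $x^{\db}$. The main obstacle is precisely the \emph{applicability} of the trades: an exchange that removes an $x_r$ simultaneously demands the presence of certain middle variables, which a given $x^{\cb}$ need not contain, so one must either choose among a sufficiently rich collection of $k$-independent exchanges or first manufacture the needed middle variables out of the large $x_1$-reservoir. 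Proving, uniformly for $k\gg 0$, that every factorization in the relevant fibers is trade-connected to a \emph{unique} $x_r$-free (respectively $x_1$-free) extreme vertex — so that these extreme vertices are forced, the inhomogeneous generators are indispensable, and therefore \emph{every} minimal binomial generator has the stated shape — is the crux of the argument, and is exactly where the quantitative lower bound on $k_0$ must be extracted.
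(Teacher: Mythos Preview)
The paper does not prove this theorem; it is quoted as \cite[Corollary~3.7]{Vu} and used as a black box in the proof of Theorem~\ref{thm:main}. There is therefore no proof in the present paper to compare your proposal against.

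On its own merits, your sketch is headed in a reasonable direction --- the fiber-graph/Markov-basis viewpoint is the natural one --- but it has two genuine gaps. First, invoking Theorem~\ref{thm:vu-periodicity} to pin down $k_0$ is circular: in Vu's paper the structural result you are trying to prove (his Corollary~3.7) is an input to the proof of periodicity, not a consequence of it, so you cannot appeal to periodicity here. Second, and more seriously, you explicitly flag but do not resolve the crux. Your trade argument, even granting the applicability of the exchanges, shows only that each connected component of $\nabla_N$ \emph{contains} an $x_r$-free (resp.\ $x_1$-free) vertex; this yields the weaker conclusion that \emph{some} minimal binomial generating set has inhomogeneous generators of the required shape. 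The theorem as stated asserts that \emph{every} minimal binomial inhomogeneous generator has this shape, which forces the relevant components to be singletons (the generators must be indispensable). Establishing that uniqueness is the real work, and your proposal stops precisely at the point where it must begin: you would need a quantitative argument showing, for $k$ large, that two distinct $x_r$-free factorizations of the same $N$ with the same total degree $p$ are always trade-connected to each other, and likewise on the low side.
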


The main result we wish to prove is the following.

\begin{Theorem}
\label{thm:main}
Let $H$ be a numerical semigroup.
Then there exists $k_0 \in \NN$  such that for all $k\geq k_0$, the ideal $I_{H_k}$ is minimally generated
by a standard basis and such that $\beta_i(I_{H_k})= \beta_i(I_{H_k}^*)$ for all $i$.
In particular, $\gr_\mm K[H_k]$ is Cohen--Macaulay for all $k\geq k_0$.
\end{Theorem}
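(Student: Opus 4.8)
The plan is to apply Lemma~\ref{lemma:crit} to $I=I_{H_k}$ with $x_1$ as the distinguished variable, once $k$ is large enough, and to read off all three assertions of the theorem from that lemma together with a routine comparison of Betti numbers under reduction by a regular element. I would begin by fixing $k_0\geq a_r-2a_1+1$, large enough that moreover Vu's Theorem~\ref{thm:vu-equation} holds; for $k\geq k_0$ the sequence $\ab+k$ minimally generates $H_k$, so $I_{H_k}$ is the toric ideal of $K[H_k]$. Choose a minimal generating set $f_1,\dots,f_s$ of $I_{H_k}$ consisting of binomials, ordered so that $f_1,\dots,f_p$ are homogeneous and $f_{p+1},\dots,f_s$ are the inhomogeneous ones, each of the shape~\eqref{form}. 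Two hypotheses of Lemma~\ref{lemma:crit} are immediate: $I_{H_k}\subseteq\nn$ because each binomial generator has no constant term; and $x_1\mapsto t^{a_1+k}\neq 0$ in the one-dimensional domain $S/I_{H_k}=K[H_k]$, so $x_1$ is a nonzerodivisor there and, the $\nn$-adic completion $\widehat S/I_{H_k}\widehat S$ being faithfully flat over $S/I_{H_k}$, a nonzerodivisor on $\widehat S/I_{H_k}\widehat S$ as well.

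The crux is the behaviour of $\pi$ on these generators. Set $g_i=\pi(f_i)$, so that $\bar I:=\pi(I_{H_k})=(g_1,\dots,g_s)\bar S$. For $i\le p$ the minimal generator $f_i$ cannot be divisible by $x_1$ in both of its monomials, for otherwise $f_i\in x_1I_{H_k}\subseteq\nn I_{H_k}$, contradicting minimality; hence $g_i$ is a nonzero homogeneous element of $\bar S$, a monomial or a homogeneous binomial, with $\nu(g_i)=\deg g_i=\deg f_i=\nu(f_i)$. For $i>p$, writing $f_i=x_1^{\alpha_i}u_i-v_ix_r^{\beta_i}$ with $\alpha_i>0$ and $\deg x_1^{\alpha_i}u_i>\deg v_ix_r^{\beta_i}$ as in~\eqref{form}, we have $f_i^*=-v_ix_r^{\beta_i}$, while $\pi$ annihilates $x_1^{\alpha_i}u_i$ and fixes $v_ix_r^{\beta_i}$, so $g_i=-v_ix_r^{\beta_i}$ is a monomial with $\nu(g_i)=\deg g_i=\nu(f_i)$. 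In particular every $g_i$ is homogeneous, so $\bar I$ is a \emph{graded} ideal of $\bar S$; consequently $\bar I^*=\bar I$ and $g_1,\dots,g_s$ is automatically a standard basis of $\bar I$, lifted by $f_1,\dots,f_s$ with $\nu(f_i)=\nu(g_i)$. Lemma~\ref{lemma:crit} then yields everything at once: part~(a) says $f_1,\dots,f_s$ is a standard basis of $I_{H_k}$, so the chosen minimal generating set is a standard basis; part~(b) says $x_1$ is regular on $\gr_\mm K[H_k]=S/I_{H_k}^*$, and since that ring is one-dimensional it is Cohen--Macaulay.

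For the Betti numbers, observe that $f_i^*=f_i$ for $i\le p$ and $f_i^*=-v_ix_r^{\beta_i}=g_i$ for $i>p$, so $\pi(I_{H_k}^*)=(\pi(f_i^*))\bar S=(g_i)\bar S=\bar I$. Since $x_1$ is a nonzerodivisor on $S$, on $S/I_{H_k}$, and --- by part~(b) --- on $S/I_{H_k}^*$, reduction modulo $x_1$ preserves Betti numbers, so $\beta_i^S(S/I_{H_k})=\beta_i^{\bar S}(\bar S/\bar I)=\beta_i^S(S/I_{H_k}^*)$ for all $i$; this is exactly the claim $\beta_i(I_{H_k})=\beta_i(I_{H_k}^*)$, and it shows in passing that $\mu(I_{H_k}^*)=\mu(I_{H_k})=s$, so $f_1^*,\dots,f_s^*$ minimally generate $I_{H_k}^*$. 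The one point where genuine input is needed is that $\bar I$ is graded, and this is precisely where Vu's Theorem~\ref{thm:vu-equation} is indispensable: the condition $\alpha>0$ forces $\pi$ to kill the leading monomial entirely, and the inequality $\deg x_1^\alpha u>\deg vx_r^\beta$ identifies $f^*$ with the trailing monomial, so that $\pi(f_i)$ is a monomial rather than an inhomogeneous binomial; without the special form~\eqref{form} the image of an inhomogeneous generator could be a genuine inhomogeneous binomial, $\bar I$ would not be graded, and one would be forced to build a standard basis of $\bar I$ by a separate, presumably inductive, argument. The remaining verifications --- the inclusion $I_{H_k}\subseteq\nn$, the nonzerodivisor property after completion, and the invariance of Betti numbers under reduction by a regular element --- are routine.
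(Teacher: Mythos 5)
Your proof is correct and follows essentially the same route as the paper: choose $k_0$ so that Vu's Theorem~\ref{thm:vu-equation} applies and $\ab+k$ is a minimal generating set, observe that $\pi$ sends the minimal binomial generators to homogeneous elements of $\bar S$ with the same initial degree, apply Lemma~\ref{lemma:crit}, and compare Betti numbers by reducing modulo the regular element $x_1$. Your additional checks (that $\pi(f_i)\neq 0$ for the homogeneous generators and that $\pi(I_{H_k}^*)=\bar I$) only make explicit what the paper leaves implicit.
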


\begin{proof}
Let $H$ be  minimally generated by $a_1,\dots, a_r$.
We choose $k_0$ as in Theorem~\ref{thm:vu-equation} and larger than $a_r - 2a_1$,
so that $H_k$ is minimally generated by $\ab+k$ for all $k\geq k_0$, and claim that
the minimal set of generators of $I_{H_k}$ as described   in Theorem~\ref{thm:vu-equation} forms a standard basis.

Assume that $I_{H_k}$ is minimally generated by the homogeneous polynomials $f_1, \dots, f_t$ and the polynomials $g_1, \dots, g_s$ where
each $g_i$ is of the form  \eqref{form}.
With  the  notation as in Lemma \ref{lemma:crit}, we have that
$\bar{I}_{H_k} = (\bar{f_1}, \dots, \bar{f_t}, \bar{g_1}, \dots, \bar{g_s})$ and all $\bar{f_i}$ and $\bar{g_j}$ are
homogeneous polynomials, hence  they form a standard basis of  $\bar{I}_{H_k}$.

Since $\nu(f_i)= \nu(\bar{f_i})$ and $\nu(g_j)= \nu(\bar{g_j})$ for all $i, j$,  and   since  $x_1$ is a regular element on
 $K[[H_k]]=\widehat{S}/I_{H_k}\widehat{S}$, we may apply  Lemma \ref{lemma:crit} and conclude that
$f_1,\dots, f_t,g_1, \dots, g_s$ is a standard basis of $I_{H_k}$.

From Lemma \ref{lemma:crit} we also have that  $x_1$ is a form of degree $1$ which is a  regular element
on $\gr_\nn(S/I_{H_k})=S/I_{H_k}^*$.

We have the following chain of equalities
\begin{eqnarray*}
 \beta_i(S/I_{H_k})
& = &\beta_i (\bar{S} / \bar{I}_{H_k})
= \beta_i (\gr_{\bar{\nn}} (\bar{S}/ \bar{I}_{H_k} ))
= \beta_i (\gr_\nn (S/I_{H_k})/ x_1 \gr_\nn (S/I_{H_k})) \\
&= &\beta_i (\gr_\nn (S/I_{H_k}))
= \beta_i (S/I_{H_k}^*).
\end{eqnarray*}

The first equality  holds because $x_1$ is a nonzero divisor on $K[H_k]$, the second equality holds because $\bar{I}_{H_k}$ is a
 homogeneous ideal, the third  because of Lemma \ref{lemma:crit}.  Next, equation four holds because
$x_1$ is a nonzero divisor on $\gr_\nn (S/I_{H_k})$ (again, by Lemma \ref{lemma:crit}), and
finally the last equation  is valid by the definition of $I_{H_k}^*$. This completes the proof of the theorem.
\end{proof}

As a  nice application of Theorem~\ref{thm:main} one obtains  the result  that in the shifted family of a numerical semigroup certain
homological properties occur  for all large shifts simultaneously for the semigroup ring and its tangent cone.

\begin{Corollary}
\label{cor:persistence}
Let $H$ be a numerical semigroup. There exists a positive integer $k_0$ such that for any $k \geq k_0$
the ring $K[H_k]$  is  complete intersection, respectively Gorenstein, if and only if $\gr_\mm K[H_k]$ has this property.

Moreover, in the shifted family $\{H_k \}_{k\geq 0}$, the property of the tangent cone $\gr_\mm K[H_k]$ to be a
complete intersection, respectively Gorenstein, occurs eventually periodically.
\end{Corollary}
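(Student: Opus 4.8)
The plan is to deduce Corollary~\ref{cor:persistence} directly from Theorem~\ref{thm:main} together with Vu's periodicity Theorem~\ref{thm:vu-periodicity}, without any new computation. First I would fix $H$ minimally generated by $a_1 < \dots < a_r$ and choose $k_0$ as in Theorem~\ref{thm:main}, enlarged so that $H_k$ is minimally generated by $\ab+k$ for all $k \geq k_0$. For such $k$, Theorem~\ref{thm:main} gives $\beta_i(I_{H_k}) = \beta_i(I_{H_k}^*)$ for all $i$, equivalently $\beta_i(S/I_{H_k}) = \beta_i(\gr_\mm K[H_k])$ for all $i$. Since $K[H_k]$ is a one-dimensional Cohen--Macaulay ring and $\gr_\mm K[H_k] = S/I_{H_k}^*$ is Cohen--Macaulay of dimension one by Theorem~\ref{thm:main}, both rings have the same codimension $c = r-1$ in $S$, so their projective dimensions agree and equal $c$.

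Next I would recall the homological characterizations: a quotient $R = S/I$ with $I \subseteq \mathfrak{n}^2$ of codimension $c$ is a complete intersection if and only if $\beta_c(R) = 1$ and $\mu(I) = c$ (equivalently $\beta_1(R) = c$), and $R$ is Gorenstein if and only if it is Cohen--Macaulay with $\beta_c(R) = 1$. Since all Betti numbers of $K[H_k]$ and of $\gr_\mm K[H_k]$ coincide, and both are Cohen--Macaulay of the same codimension, $K[H_k]$ is a complete intersection (respectively Gorenstein) if and only if $\gr_\mm K[H_k]$ is. This proves the first assertion for all $k \geq k_0$. One small point to check: the complete intersection property is usually tested on $I_H$ itself, but since $I_H$ and $I_H^*$ have equal minimal numbers of generators here (first Betti number), the equivalence goes through cleanly.

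For the ``eventually periodic'' statement, I would invoke Theorem~\ref{thm:vu-periodicity}: the Betti numbers $\beta_i(I(\ab+k)) = \beta_i(I_{H_k})$ (for $k \geq k_0$, where $\ab + k$ is a minimal generating set) are eventually periodic in $k$ with period $a_r - a_1$. By the first part, for $k \geq k_0$ the tangent cone $\gr_\mm K[H_k]$ is a complete intersection, respectively Gorenstein, exactly when $K[H_k]$ is, and that in turn is governed purely by the Betti sequence of $I_{H_k}$ (namely by the conditions $\beta_1 = \beta_c = \cdots$ above). An eventually periodic sequence of Betti tables yields an eventually periodic truth value for any fixed predicate on those tables, so the complete intersection and Gorenstein properties of $\gr_\mm K[H_k]$ occur eventually periodically in $k$.

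I do not expect a serious obstacle here, as the corollary is essentially a bookkeeping consequence of the two cited theorems; the only mild care needed is (i) to state precisely the Betti-number criteria for complete intersection and Gorenstein for codimension-$c$ quotients and to note they are preserved under passing to the associated graded ring once Cohen--Macaulayness and equality of all Betti numbers are known, and (ii) to make sure the shift range where $\ab+k$ is minimal, the range $k \geq k_0$ from Theorem~\ref{thm:main}, and the range where Vu's periodicity kicks in are all eventually satisfied simultaneously, which they are since each holds for all sufficiently large $k$.
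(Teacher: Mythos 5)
Your proposal is correct and follows essentially the same route as the paper: the paper's proof likewise picks $k_0$ from Theorem~\ref{thm:main}, deduces the first assertion from the fact that $K[H_k]$ and $\gr_\mm K[H_k]$ have the same codimension and the same Betti numbers (which determine the complete intersection and Gorenstein properties via $\beta_1$ and $\beta_c$), and obtains the periodicity from Theorem~\ref{thm:vu-periodicity}. You have merely spelled out the homological criteria that the paper leaves implicit.
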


\begin{proof}
Pick $k_0$ as given by Theorem \ref{thm:main} applied to the semigroup $H$.

The first part of the corollary follows from the fact that $K[H_k]$ and  $\gr_\mm K[H_k]$
have the same codimension and the same Betti numbers.

The fact about periodicity arises  from the eventual periodicity of the Betti numbers for $K[H_k]$,  see Theorem \ref{thm:vu-periodicity}.
\end{proof}

We define the {\em width} of a numerical semigroup $H$ as the difference between
the largest and the smallest generator in a minimal set of generators of $H$, and denote this number by $\width (H)$.
Notice that any semigroup $H_k$ in the shifted family of $H$ has $\width (H_k) \leq  \width(H)$, with equality for $k\gg 0$.

\medskip
As an immediate consequence of   our Theorem \ref{thm:main} and of  Theorem \ref{thm:vu-periodicity}  we obtain

\begin{Corollary}
\label{cor:width}
Let  $w\geq 2$ and let $\mathcal{H}_w$ be the set of all numerical semigroups $H$ with $\width(H) \leq w$.
Then for any integer $i\geq 0$ there exists an integer $b$ such that
$$
\beta_i(I_{H}^*) \leq b \quad \text{for all} \quad H \in \mathcal{H}_w.
$$
\end{Corollary}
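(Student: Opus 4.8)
The plan is to reduce the statement to a finite problem by exploiting the shifted-family structure, so that Vu's periodicity (Theorem~\ref{thm:vu-periodicity}) together with Theorem~\ref{thm:main} does all the work. First I would observe that, up to translating so that the smallest generator is $a_1$, any $H\in\mathcal{H}_w$ is of the form $H=\langle a_1,\dots,a_r\rangle$ with $a_r-a_1\le w$; in particular $r\le w+1$ and the generating tuple $\ab$ is a subtuple of $\{a_1,a_1+1,\dots,a_1+w\}$. Since $\beta_i(I_H^*)$ is unchanged under the simultaneous translation of all generators by a fixed integer (the semigroup ring $K[H]$ and hence its tangent cone only depend on $H$ up to such a shift — or more precisely, $I_H^*$ is determined by the combinatorics of $H$), I may restrict attention to semigroups $H$ whose minimal generating set is a subset $A\subseteq\{a_1,a_1+1,\dots,a_1+w\}$ with $a_1\in A$. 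Writing $A=\{a_1,a_1+d_2,\dots,a_1+d_r\}$ with $0<d_2<\dots<d_r\le w$, the point is that for fixed $w$ there are only finitely many choices of the ``gap pattern'' $D=\{d_2,\dots,d_r\}$, and each $H$ in $\mathcal{H}_w$ with this gap pattern is precisely a member $H'_{k}$ of the shifted family of the model semigroup $H'=\langle 0',d_2,\dots,d_r\rangle$ suitably normalized — more carefully, one takes the base sequence $\ab' = 1, 1+d_2,\dots, 1+d_r$ (so that the smallest entry is positive) and observes that $\ab'+k = (1+k),(1+k+d_2),\dots$ runs through all admissible generating tuples as $k$ varies.

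Next I would invoke Vu's Theorem~\ref{thm:vu-periodicity} applied to the base sequence $\ab'$: the Betti numbers $\beta_i(I(\ab'+k))$ are eventually periodic in $k$, hence the sequence $\{\beta_i(I(\ab'+k))\}_{k\ge 0}$ takes only finitely many values. By Theorem~\ref{thm:main} applied to the semigroup $H'=\langle\ab'\rangle$, there is a $k_0$ such that for $k\ge k_0$ one has $\beta_i(I^*_{H'_k}) = \beta_i(I_{H'_k})$, and the latter are among the finitely many values just described. For the finitely many small shifts $k<k_0$ the numbers $\beta_i(I^*(\ab'+k))$ are just finitely many specific integers. Therefore, for each fixed gap pattern $D$, the set of values $\{\beta_i(I^*_H) : H\in\mathcal{H}_w,\ H \text{ has gap pattern }D\}$ is finite, hence bounded. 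Taking the maximum of these bounds over the finitely many gap patterns $D\subseteq\{1,\dots,w\}$ yields a uniform bound $b=b(i,w)$, which is exactly the assertion.

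The main obstacle, and the one deserving care in a full write-up, is the bookkeeping that identifies ``all $H\in\mathcal{H}_w$'' with ``finitely many shifted families, taken at all shifts $k\ge 0$.'' One must check (i) that $\beta_i(I_H^*)$ is genuinely invariant under a uniform translation of the generators — this is clear since such a translation is an isomorphism of the data defining $K[H]$ and its tangent cone — and (ii) that the generating tuple $\ab'+k$ is indeed the \emph{minimal} generating tuple of $H'_k$ for all large $k$ (true once $k>d_r-2$, by the remark in the introduction) while for small $k$ one simply uses $I^*_{H'_k} = I((\ab'+k)^{\text{min}})^*$ computed directly; in either case only finitely many ideals arise. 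Once this reduction is in place, the finiteness of the set of Betti numbers is immediate from Theorems~\ref{thm:vu-periodicity} and~\ref{thm:main}, and the bound $b$ exists purely because a finite set of integers is bounded.
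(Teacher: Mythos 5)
Your proposal is correct and follows essentially the same route as the paper: decompose $\mathcal{H}_w$ into the finitely many shifted families indexed by the possible ``gap patterns'' (the paper's set $\mathcal{A}$ of increasing sequences starting at $0$ with last term at most $w$), bound $\beta_i(I(\ab+k)^*)$ for $k\gg 0$ by combining Theorem~\ref{thm:main} with Vu's periodicity, and absorb the finitely many small shifts into the maximum. Your extra care about minimality of the shifted tuple and translation-invariance is a harmless elaboration of the same argument.
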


\begin{proof}

Let $\mathcal{A}$ be the set of all strictly increasing sequences of integers $\ab$ with first term $0$ and last term at most $w$.
Given a numerical semigroup $H$ with $\width(H) \leq w$, there exists a unique $\ab \in \mathcal{A}$ and a
unique integer $k$ such that $H= \langle \ab +k \rangle$.

Therefore, given $i$, it suffices to show that there exists $b$ such that
\begin{eqnarray}
\label{above}
\beta_i(I(\ab +k)^*) \leq b \quad \text{ for all $\ab \in \mathcal{A}$ and all $k \geq 0$}.
\end{eqnarray}
Since $\mathcal{A}$ is finite, we only need to show \eqref{above}   for any fixed
$\ab \in \mathcal{A}$ and all $k$.

Now fix $\ab \in \mathcal{A}$. By Theorem \ref{thm:main}, there exists an integer $k_0$ such that
for all $k \geq k_0$, $\beta_i(I(\ab+k)^*)= \beta_i(I(\ab+k))$. To conclude the proof, we use  Theorem \ref{thm:vu-periodicity} from
which it follows that there exists an integer $k_1 \geq k_0$ and an integer $b_1$
such that
$$
\beta_i(I(\ab+k)) \leq b_1 \quad \text{ for all $k \geq k_1$}.
$$
Let
$$
  b_0= \max\{ \beta_i(I(\ab +k)^*\: k\leq k_1\},
$$
and set $b= \max \{b_0, b_1\}$. Then
$\beta_i(I(\ab+k)^*) \leq b$ for all $k$, as desired.
\end{proof}

\section{Expected bounds for $\mu(I_H^*)$}
\label{sec:conjectures}

It would be nice to have an explicit value for the bound $b$ in Corollary \ref{cor:width}  in terms of the width of the semigroup.
Computer calculations with  CoCoA  \cite{Cocoa} and SINGULAR  \cite{Sing}   suggest  us to formulate the following conjecture.

\begin{Conjecture}
\label{conj:one} If $H$ is a numerical semigroup, then   $\mu (I^*_H) \leq {\width(H)+1 \choose {2}}$.
If $\mu(H) \geq 2$, then equality holds if and only if there exist integers  $w,k \geq 1$ such that
$$
H = \langle kw+1, kw+2, \dots,  (k+1)w+1 \rangle.
$$
\end{Conjecture}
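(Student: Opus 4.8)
The plan is to deduce both halves of the conjecture --- the bound $\mu(I_H^*)\le\binom{\width(H)+1}{2}$ and the classification of the equality case --- from a comparison with the ``interval completion'' $\widetilde H=\langle a_1,a_1+1,\dots,a_r\rangle$ of $H$, which satisfies $\width(\widetilde H)=a_r-a_1=w=\width(H)$. The bound rests on two ingredients: (i) the monotonicity $\mu(I_H^*)\le\mu(I_{\widetilde H}^*)$, and (ii) an exact evaluation $\mu(I_{\widetilde H}^*)=\binom{w+1}{2}-(\rho-1)$, where $\rho\in\{1,\dots,w\}$ is the residue of $a_1$ modulo $w$. Granting these, $\mu(I_H^*)\le\mu(I_{\widetilde H}^*)\le\binom{w+1}{2}$ at once, and an equality here forces simultaneously $\rho=1$ and $\mu(I_H^*)=\mu(I_{\widetilde H}^*)$.

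To prove (ii) I would first compute the Hilbert function of $\gr_\mm K[\widetilde H]$ directly from the semigroup: writing $a_1=qw+\rho$ with $1\le\rho\le w$, the block of ``length-$n$ sums of generators'' fills the interval $[na_1,na_1+nw]$, so $\dim\gr_\mm K[\widetilde H]_n$ equals $nw+1$ so long as consecutive blocks do not overlap, and becomes constant equal to the multiplicity $e=a_1$ thereafter; hence the Hilbert function is $1,w+1,2w+1,\dots$, stabilising at $a_1$, with first difference $1,w,w,\dots,w,\delta,0,\dots$ where $\delta=w$ if $\rho=1$ and $\delta=\rho-1$ otherwise. Because $\widetilde H$ is generated by an arithmetic progression of difference $1$, the Gimenez--Sengupta--Srinivasan description of $I_{\widetilde H}$ --- the $\binom{w}{2}$ minors $x_ix_{j+1}-x_{i+1}x_j$ of the $2\times w$ scroll matrix on $x_1,\dots,x_{w+1}$, together with finitely many binomials of the shape $x_1^\alpha u-vx_r^\beta$ --- can be seen (using Lemma \ref{lemma:crit} or the GSS resolution) to be a standard basis, so $x_1$ is a degree-one nonzerodivisor on $\gr_\mm K[\widetilde H]$ and the first difference above is the Hilbert function of the Artinian reduction. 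Tracking which degrees contribute minimal generators of $I_{\widetilde H}^*$ --- the quadric part contributes $\binom{w}{2}$, or all of $\binom{w+1}{2}$ in the minimal multiplicity case $a_1=w+1$, while the scroll structure confines the remaining generators to one higher degree, $w-(\rho-1)$ of them --- yields $\mu(I_{\widetilde H}^*)=\binom{w+1}{2}-(\rho-1)$, which is maximal, equal to $\binom{w+1}{2}$, exactly when $\rho=1$, i.e.\ $w\mid a_1-1$. I would double-check the bookkeeping against $\langle3,4,5\rangle,\langle4,5,6\rangle,\langle5,6,7\rangle,\langle4,5,6,7\rangle$, whose initial ideals are minimally generated by $3,2,3,6$ elements.

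For the equality case, one implication follows at once from (ii): if $H=\langle kw+1,kw+2,\dots,(k+1)w+1\rangle$ with $k,w\ge1$, then $H=\widetilde H$ with $a_1=kw+1$, so $\rho=1$ and $\mu(I_H^*)=\binom{w+1}{2}$; here $k\ge1$ guarantees these $w+1$ integers generate $H$ minimally, since any sum of two of them is $\ge2(kw+1)>(k+1)w+1$ and any two distinct ones differ by at most $w<kw+1=a_1$. For the converse, assume $\mu(H)\ge2$ and $\mu(I_H^*)=\binom{w+1}{2}$. Then $\binom{w+1}{2}=\mu(I_H^*)\le\mu(I_{\widetilde H}^*)=\binom{w+1}{2}-(\rho-1)$ forces $\rho=1$, so $a_1=kw+1$ for some integer $k$, and forces equality in (i). One then needs that this last equality implies $H=\widetilde H$, i.e.\ that $a_1,\dots,a_r$ already exhausts the interval $[a_1,a_r]$; equivalently, that $H\subsetneq\widetilde H$ forces the monotonicity (i) to be \emph{strict}. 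Finally $\mu(H)\ge2$ gives $w\ge1$, and $a_1\ge2$ (otherwise $a_1=1$, $H=\NN$, $\mu(H)=1$), whence $k\ge1$; this completes the converse.

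The crux --- and the place where I expect to get stuck --- is ingredient (i) and its strict form in the converse. The paper establishes $\mu(I_H^*)\le\mu(I_{\widetilde H}^*)$, indeed $\beta_i(I_H^*)\le\beta_i(I_{\widetilde H}^*)$ for all $i$, only for $H$ generated by an arithmetic sequence, where the GSS machinery is available, and for a general $H$ there is no obvious comparison since $K[H]$ is a subring, not a quotient, of $K[\widetilde H]$. A plausible attack is to pass to large shifts: by Theorem \ref{thm:main}, $\mu(I_{H_k}^*)=\mu(I_{H_k})$ and $\mu(I_{\widetilde H_k}^*)=\mu(I_{\widetilde H_k})$ for $k\gg0$, so (i) for such $k$ reduces to comparing the semigroup-ring invariants $\mu(I_{H_k})$ and $\mu(I_{\widetilde H_k})$, for which Vu's structural Theorem \ref{thm:vu-equation} describes the relevant binomial generators; together with the eventual periodicity of Theorem \ref{thm:vu-periodicity} and an induction on $w$ to absorb both the finitely many small shifts and all semigroups of strictly smaller width, this could close the argument. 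Controlling those small shifts --- precisely where the width may drop and the stabilisation theorems fail --- is the delicate point, and extracting the strict inequality needed for the converse looks no easier.
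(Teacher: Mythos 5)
The statement you are trying to prove is Conjecture~\ref{conj:one}; the paper does not prove it. It is verified there only computationally for $\width(H)\leq 5$, established for semigroups generated by arithmetic sequences (Propositions~\ref{prop:consecutive} and~\ref{prop:conj2-arithmetic}), and otherwise reduced \emph{conditionally} to Conjecture~\ref{conj:two} via the chain $\mu(I_H^*)\leq\mu(I_{\widetilde H}^*)\leq\binom{\width(\widetilde H)+1}{2}\leq\binom{\width(H)+1}{2}$. Your proposal reproduces exactly this reduction: your ingredient (i) \emph{is} Conjecture~\ref{conj:two}, and your ingredient (ii) is in substance Proposition~\ref{prop:consecutive} together with the Gimenez--Sengupta--Srinivasan Betti formula (your count $\mu(I_{\widetilde H}^*)=\binom{w+1}{2}-(\rho-1)$ agrees with $\beta_1$ from Theorem~\ref{thm:betti-arithmetic}, provided $w$ is taken to be $\width(\widetilde H)$). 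You correctly identify (i) as the crux and concede you cannot prove it; that concession is the gap, and it is not a repairable detail --- for general $H$ the comparison with $\widetilde H$ is open precisely because $K[H]$ sits inside $K[\widetilde H]$ as a subring rather than a quotient, so no functorial map of resolutions is available. Your proposed attack via large shifts does not close it: Theorem~\ref{thm:main} converts the problem into comparing $\mu(I_{H_k})$ with $\mu(I_{\widetilde{H}_k})$ for $k\gg 0$, but neither Theorem~\ref{thm:vu-periodicity} nor Theorem~\ref{thm:vu-equation} compares generator counts across two different semigroups, and the finitely many small shifts where the width drops are exactly where all the stabilization results fail.

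Two further points. First, your opening assertion $\width(\widetilde H)=\width(H)$ is false in general: by Lemma~\ref{lemma:Htilde}, $\width(\widetilde H)=\min\{a_1-1,\width(H)\}$ (e.g.\ $H=\langle 3,7\rangle$ has $\widetilde H=\langle 3,4,5\rangle$). This is harmless for the inequality, since it only shrinks the bound, but your formula (ii) and the residue bookkeeping in the equality analysis must be stated with $w=\width(\widetilde H)$. Second, your converse for the equality case needs the \emph{strict} inequality $\mu(I_H^*)<\mu(I_{\widetilde H}^*)$ whenever $H\subsetneq\widetilde H$, which is strictly stronger than Conjecture~\ref{conj:two} and is not asserted anywhere in the paper, even conjecturally; the paper's classification of the equality case is itself part of the conjecture and is proved only in the arithmetic-sequence setting.
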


Observe that this conjecture implies in particular that $\mu(I_H) \leq  {{\width(H)+1} \choose {2}}$.
We verified Conjecture \ref{conj:one}   for all numerical semigroups whose width is at most $5$.
We did this as follows: for a fixed width $w \leq 5$ we considered all sequences of strictly increasing  integers
$\ab =  a_1  < \dots <a_r$ with $a_1=0$ and $a_r=w$. For such a sequence we computed the values of $\mu(I(\ab+k)^*)$ when we let $k$ vary.
According to  Vu's Theorem \ref{thm:vu-periodicity} and our Theorem \ref{thm:main},
there exists an integer $k_{\ab}$ such that for all $k \geq k_{\ab}$ the values of $\mu(I(\ab+k)^*)$ become periodic with period $w$.
For each of our sequences $\ab$ we have identified the value of $k_{\ab}$ and by inspection of  $\mu(I(\ab+k)^*)$ for $k < k_{\ab}+w$ we  verified Conjecture \ref{conj:one}.

\medskip
Numerical experiments allow us to formulate an even stronger claim. Before we state it, let us give a couple of definitions.

Let $H$ be a numerical semigroup minimally generated by $a_1 < \dots <a_r$.  We let $\widetilde{H} = \langle a_1, a_1+1, a_1+2, \dots, a_r \rangle$ be the semigroup generated by
 all integers in the interval $[a_1,a_r]$. We call  $\widetilde{H}$ the {\em interval completion} of $H$.

If a numerical semigroup $H$ is generated by all the integers of an interval, we call it an {\em interval semigroup}.
Clearly, if $H$ is any numerical semigroup, its interval completion $\widetilde{H}$ is an interval semigroup.

With notation as above, the integers in the interval $[a_1, a_r]$ may not always be a minimal generating set for  $\widetilde{H}$.
For example, if $H= \langle 3, 7 \rangle$ then we have $\widetilde{H} = \langle 3,4,5,6,7 \rangle = \langle 3,4,5 \rangle$.

Let $i \leq \width(H)$ be a positive integer. Then $a_1+i$ is not a minimal generator for $\widetilde{H}$ if  $i \geq a_1$.
Hence we get

\begin{Lemma}
\label{lemma:Htilde}
For any numerical semigroup $H$ one has  $\widetilde{H} = \langle a_1 , \dots, a_1+ \width(\widetilde{H})\rangle$, where
$$
\width(\widetilde{H})  =\mu(\widetilde{H}) -1=  \min \{ a_1 -1 , \width(H)   \}.
$$
\end{Lemma}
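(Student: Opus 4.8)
The plan is to pin down the minimal generating set of $\widetilde H$ exactly and then read off both displayed equalities by inspection. The starting observation is that $a_1=\min(\widetilde H\setminus\{0\})$, since every nonzero element of $\widetilde H$ is a sum of integers from the interval $[a_1,a_r]$ and hence is at least $a_1$. Consequently any element of $\widetilde H$ which is a sum of two nonzero elements of $\widetilde H$ is at least $2a_1$.

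First I would show that each integer $a_1+i$ with $0\le i\le\min\{a_1-1,\width(H)\}$ is a minimal generator of $\widetilde H$: it lies in $[a_1,a_r]$ (because $i\le\width(H)=a_r-a_1$), so it belongs to $\widetilde H$, and it is $<2a_1$, so by the previous paragraph it cannot be written as a sum of two nonzero elements of $\widetilde H$. Next I would establish the converse, namely that no other generator of $\widetilde H$ survives in a minimal set: the generators of $\widetilde H$ are exactly the integers $a_1+i$ with $0\le i\le\width(H)$, and for $a_1\le i\le\width(H)$ one has $a_1+i=a_1+i$ with both $a_1$ and $i$ lying in $\{a_1,\dots,a_r\}$ (indeed $i\ge a_1\ge 1$ and $i\le a_r-a_1<a_r$), so $a_1+i$ is a sum of two generators of $\widetilde H$ and therefore not a minimal generator. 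This is exactly the remark made immediately before the statement.

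Combining the two steps, the minimal generating set of $\widetilde H$ is precisely $\{\,a_1+i : 0\le i\le\min\{a_1-1,\width(H)\}\,\}$, which has $\min\{a_1-1,\width(H)\}+1$ elements with largest element $a_1+\min\{a_1-1,\width(H)\}$. Hence $\mu(\widetilde H)=\min\{a_1-1,\width(H)\}+1$, $\width(\widetilde H)=\min\{a_1-1,\width(H)\}=\mu(\widetilde H)-1$, and $\widetilde H=\langle a_1,\dots,a_1+\width(\widetilde H)\rangle$. There is no genuine obstacle in this argument; the only point worth flagging is the degenerate case $\width(H)\ge a_1$, where the largest generator $a_r$ of $H$ is itself redundant in $\widetilde H$ and the minimal generators of $\widetilde H$ stop at $2a_1-1$ — this is already subsumed by the uniform formula, but it explains why a minimum appears.
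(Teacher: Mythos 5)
Your proof is correct and takes essentially the same approach as the paper, which justifies the lemma only by the one-line remark preceding it (that $a_1+i$ is not a minimal generator of $\widetilde H$ once $a_1\le i\le \width(H)$, since then $i$ itself lies in $[a_1,a_r]$). You merely make explicit the converse half --- that each $a_1+i$ with $0\le i\le\min\{a_1-1,\width(H)\}$ is indispensable because it is smaller than $2a_1$ --- which the paper leaves implicit.
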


As a consequence we obtain
\begin{Lemma}
\label{lemma:mu-tilde}
If $H$ is a numerical semigroup, then
$\mu(H) \leq \mu (\widetilde{H})$.
\end{Lemma}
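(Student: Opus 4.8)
The claim is that $\mu(H) \le \mu(\widetilde{H})$ for any numerical semigroup $H$ minimally generated by $a_1 < \dots < a_r$. By Lemma~\ref{lemma:Htilde} we know $\mu(\widetilde{H}) = \min\{a_1, \width(H)+1\}$, so it suffices to show $r \le \min\{a_1, \width(H)+1\}$, i.e.\ that $r \le a_1$ and $r \le a_r - a_1 + 1$. The second inequality is the easy one: since $a_1 < a_2 < \dots < a_r$ are $r$ distinct integers lying in the interval $[a_1, a_r]$, which contains exactly $a_r - a_1 + 1$ integers, we get $r \le a_r - a_1 + 1 = \width(H)+1$ immediately, with no use of minimality.

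The substantive point is $r \le a_1$. The plan is to argue by contradiction using the minimality of the generating set: if $r \ge a_1 + 1$, then among the $r$ generators $a_2, \dots, a_r$ (that is, $r-1 \ge a_1$ of them) we consider their residues modulo $a_1$. Actually the cleaner route is to look at all $r$ generators $a_1, \dots, a_r$ and their residues mod $a_1$; since there are only $a_1$ residue classes, if $r > a_1$ then by pigeonhole two distinct generators $a_i$ and $a_j$ (say $a_i < a_j$, and neither equal to $a_1$ unless one of them is $a_1$, which would only make things easier) satisfy $a_j \equiv a_i \pmod{a_1}$. Then $a_j = a_i + m a_1$ for some integer $m \ge 1$, so $a_j = a_i + m a_1 \in \langle a_1, a_i \rangle \subseteq \langle a_1, \dots, \widehat{a_j}, \dots, a_r\rangle$, contradicting the minimality of the generating set. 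Hence $r \le a_1$.

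Combining the two bounds gives $r \le \min\{a_1, \width(H)+1\} = \mu(\widetilde{H})$ by Lemma~\ref{lemma:Htilde}, which is exactly the assertion $\mu(H) \le \mu(\widetilde{H})$.

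**Anticipated obstacle.** There is essentially no obstacle here; the only thing to be slightly careful about is the edge behavior of the pigeonhole step. If $a_1$ itself occurs as one of the two colliding generators, say $a_1 \equiv a_j \pmod{a_1}$ with $j > 1$, then $a_j$ is a multiple of $a_1$ and the contradiction with minimality is even more direct. One should also note the degenerate case $\mu(H) = 1$ (that is, $H = \langle a_1 \rangle$), where the inequality $1 \le 1$ is trivial since $\widetilde{H} = H$; the interesting content is all for $\mu(H) \ge 2$. So the proof is just the two short inequalities above, the first by counting integers in $[a_1, a_r]$ and the second by pigeonhole on residues mod $a_1$ together with minimality.
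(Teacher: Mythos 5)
Your proof is correct and uses the same two ingredients as the paper's own argument: the pigeonhole principle on residues modulo $a_1$ combined with minimality of the generating set (giving $\mu(H)\le a_1$), and the fact that the $r$ minimal generators are distinct integers in the interval $[a_1,a_r]$ (giving $\mu(H)\le \width(H)+1$). The paper organizes this as a case split according to which of $a_1$ and $\width(H)+1$ equals $\mu(\widetilde{H})$, whereas you prove both bounds unconditionally and take the minimum via Lemma~\ref{lemma:Htilde}; the substance is identical.
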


\begin{proof}
With notation as above,  if $a_1 < \width(H) +1$, then $ a_1= \mu(\widetilde{H})$.
Suppose that $\mu(H) > a_1$. Then there exist two distinct minimal generators for $H$, say  $b$, $c$, such that $b \equiv c \mod a_1$, a contradiction.

If $a_1 \geq \width(H) +1$, then $\mu(\widetilde{H}) = \width(H) +1 $ and by Lemma \ref{lemma:Htilde} we have that $\widetilde{H}$ is minimally generated by the whole interval
$[a_1, a_r]$, which clearly includes the minimal generating set of $H$. Hence $\mu(H) \leq \mu(\widetilde{H})$ in this case, too.
\end{proof}


Observe that $H$ and $\widetilde{H}$ may have the same number of generators, although they are different.  For instance, if $H= \langle 3,5,7 \rangle$, then
$\widetilde{H}= \langle 3,4,5 \rangle$.
With notation as above, if $\width(H)  \leq  a_1-1$, then $\mu(H) = \mu(\widetilde{H})$ if and only if $\mu(H)= a_r-a_1+1$, equivalently $H=\widetilde{H}$.

If $\width(H) >a_1-1$, we have $\mu(H)= \mu(\widetilde{H})$ if and only if $r= a_1$, equivalently $\widetilde{H}=\langle r , \dots, 2r-1 \rangle$. For any fixed $r$ there
are usually several numerical semigroups $H$ minimally generated by $a_1=r <a_2 < \dots < a_r$ and such that $\widetilde{H}=\langle r , \dots, 2r-1 \rangle$.
A necessary condition for that to happen is that $a_i \not\equiv a_j \mod r$ for all $1\leq i < j \leq r$.

\medskip
We can now state

\begin{Conjecture}
\label{conj:two}
Let $H$ be a numerical semigroup. Then $\mu(I_H^*) \leq \mu(I_{\widetilde{H}}^*)$.
\end{Conjecture}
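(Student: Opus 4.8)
The plan is to attack Conjecture~\ref{conj:two} by comparing the two tangent cones through the combinatorial structure of their defining equations. First I would fix a minimal generating set $a_1 < \dots < a_r$ of $H$ and set $w = \width(H)$, $\widetilde w = \width(\widetilde H) = \min\{a_1-1, w\}$, so that $\widetilde H = \langle a_1, a_1+1, \dots, a_1+\widetilde w\rangle$ by Lemma~\ref{lemma:Htilde}. The key idea is to exhibit a surjective $K$-algebra map between the two associated graded rings that would force an inequality on the minimal numbers of generators of the defining ideals. More precisely, I would try to define a map $\psi\colon K[y_0,\dots,y_{\widetilde w}] \to K[x_1,\dots,x_r]$ sending $y_j$ to a suitable variable or monomial, in such a way that $\psi$ carries $I_{\widetilde H}^*$ into $I_H^*$ and induces a surjection $\gr_\mm K[\widetilde H] \twoheadrightarrow \gr_\mm K[H]$ (after an appropriate identification of a polynomial subring). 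The existence of such a surjection of standard graded $K$-algebras would give $\mu(I_H^*) \le \mu(I_{\widetilde H}^*)$, because the number of generators of the defining ideal of a standard graded quotient of a polynomial ring in a fixed number of variables can only go up under such a presentation-respecting surjection—this is exactly the kind of bound one gets from comparing minimal free resolutions, using that $\beta_1$ is the number of defining equations.

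The core step is thus to produce the right map. One natural attempt: since every generator $a_i$ of $H$ lies in the interval $[a_1,a_r] = [a_1, a_1+w]$, each $t^{a_i}$ is a monomial in the generators $t^{a_1},\dots,t^{a_1+\widetilde w}$ of $K[\widetilde H]$ when $w < a_1$; more generally one must handle the case $w \ge a_1$, where $\widetilde H = \langle a_1,\dots,2a_1-1\rangle$ and the "missing" large generators $a_1+a_1,\dots$ are themselves sums of two elements of $[a_1,2a_1-1]$, so $K[H] \subseteq K[\widetilde H] = K[t^{a_1},\dots,t^{2a_1-1}]$ in every case. Hence there is always an inclusion $K[H] \hookrightarrow K[\widetilde H]$ of subalgebras of $K[t]$, and both are graded by the $t$-degree. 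I would next pass to tangent cones: the maximal ideal $\mm_H$ of $K[H]$ and $\mm_{\widetilde H}$ of $K[\widetilde H]$ satisfy $\mm_H \subseteq \mm_{\widetilde H}$ and $\mm_H K[\widetilde H]$ is $\mm_{\widetilde H}$-primary, and I would want the $t$-adic/$\mm$-adic filtrations to be compatible enough that one gets a map $\gr_{\mm_{\widetilde H}} K[\widetilde H] \to \gr_{\mm_H} K[H]$ or its reverse with the surjectivity running the correct way. This is the delicate point: an inclusion of rings does not in general induce a map of associated graded rings in either direction, so I would instead construct $\psi$ at the level of presentations, lifting each $x_i$ (a variable for $K[H]$) to a monomial in the $y_j$'s (variables for $K[\widetilde H]$) recording how $t^{a_i}$ is written in the generators of $K[\widetilde H]$, and then checking that this ring map sends defining binomials of $K[H]$ to elements of $I_{\widetilde H}$ and, crucially, respects initial forms so that $I_H^*$ pulls back into $I_{\widetilde H}^*$.

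I expect the main obstacle to be precisely this compatibility with initial forms: the map on polynomial rings that realizes $K[H]$ as a quotient of $K[\widetilde H]$ need not be homogeneous for the standard gradings, so the image of an initial form of a relation of $K[H]$ need not be (or divide, or be divisible by) an initial form of a relation of $K[\widetilde H]$, and $\nu$ can jump. To control this I would likely need the structural description of $I_{\widetilde H}^*$ for interval semigroups—which is relatively transparent, the initial ideal being generated by the obvious quadratic and the $2\times 2$ minors of a suitable generic-type matrix, together with a few pure powers of $x_1$—and argue case by case, as is already done for the arithmetic-sequence case in Proposition~\ref{prop:conj2-arithmetic} and for the explicit families in Section~\ref{sec:examples}. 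In fact, given how the paper is organized, it is plausible that a clean general proof is out of reach and that the honest statement is: the conjecture is established in the arithmetic case and for the listed families, with the general case remaining open; my proposal would then be to prove the surjection-of-tangent-cones inequality in the case $w < a_1$ (where $\widetilde H = \langle a_1,\dots,a_1+w\rangle$ contains $K[H]$ in a "degree-preserving" way) and flag the jump in $\nu$ in the remaining range as the essential difficulty.
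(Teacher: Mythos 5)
The first thing to say is that the statement you are asked to prove is labelled a \emph{Conjecture} in the paper, and the paper offers no proof of it: the authors only verify it for semigroups generated by arithmetic sequences (Proposition~\ref{prop:conj2-arithmetic}), and there the argument is completely different from yours --- it reduces, via Proposition~\ref{prop:arithmetic}, to comparing $\beta_i(I_H)$ with $\beta_i(I_{\widetilde H})$, and then invokes the explicit Gimenez--Sengupta--Srinivasan Betti-number formula together with the elementary inequality $\mu(H)\le\mu(\widetilde H)$. So there is no ``paper proof'' for your attempt to match, and your own closing paragraph correctly senses this: the general case is open.

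As a proof attempt, your plan has a genuine gap at its core, independent of the (real) difficulty you flag about initial forms. The engine of your argument is the claim that a presentation-respecting surjection of standard graded $K$-algebras forces an inequality between the minimal numbers of generators of the defining ideals. This is false in general: the number of defining equations is not monotone under graded surjections in either direction (for instance $K[x]\twoheadrightarrow K[x]/(x^2)$ increases it from $0$ to $1$, while $K[x,y]/(x^2,xy,y^2)\twoheadrightarrow K[x,y]/(x,y)$ decreases it from $3$ to $2$). Moreover, even if such a monotonicity held, you have the direction backwards: a surjection $\gr_\mm K[\widetilde H]\twoheadrightarrow \gr_\mm K[H]$ together with ``$\mu$ goes up under quotients'' would yield $\mu(I_{\widetilde H}^*)\le\mu(I_H^*)$, the opposite of what the conjecture asserts. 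Note also that the natural map between the two rings is the inclusion $K[H]\hookrightarrow K[\widetilde H]$, which, as you yourself observe, induces no map of associated graded rings in either direction, so the surjection you need would have to be manufactured by hand, and nothing in your sketch produces it. What survives of your proposal is essentially the honest admission at the end: the only cases one can actually settle are those where one has explicit control of $I_{H}^*$ and $I_{\widetilde H}^*$, which is exactly the route the paper takes for arithmetic sequences and for the families in Section~\ref{sec:examples}.
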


In the following we explain why a positive answer to Conjecture \ref{conj:two} will give a positive answer to the first part
of Conjecture \ref{conj:one}.
Indeed, if Conjecture \ref{conj:two} holds, we   may apply
Proposition \ref{prop:consecutive} together with Lemma \ref{lemma:Htilde}, and consequently   we obtain
$$
\mu(I_H^*) \leq \mu (I^*_{\widetilde{H}}) \leq {\width(\widetilde{H})+1 \choose 2 }  \leq {\width(H)+1 \choose 2 }.
$$
Hence the inequality in Conjecture \ref{conj:one} is valid, too.

\medskip
Next we will show that Conjecture~\ref{conj:two} holds true for a numerical semigroup which is generated by an arithmetic sequence.
Actually, we will  show  in Proposition \ref{prop:conj2-arithmetic} that in this case one even has $\beta_i(I_H^*) \leq \beta_i(I_{\widetilde{H}}^*)$ for all $i$.

Recall that a sequence of integers $a_1,  a_2, \dots, a_r$ with $r\geq 2$ is called an   {\em arithmetic sequence} if there exists  a positive  integer $d$
such that $a_i- a_{i-1}=d$ for all $i=2,\dots, r$.
The class of numerical semigroups $H$ generated by arithmetic sequences has received much attention due to the  extra structure.
A minimal system of generators for the ideal $I_H$ was presented by D.P.~Patil in \cite{Patil}. Recently, L.~Sharifan and  R.~Zaare-Nahandi have obtained
explicit formulas for the graded Betti numbers of $\gr_\mm K[H]$, see \cite[Theorem 4.1]{Sha-Za-1}. Independently, P.~Gimenez, I.~Sengupta and H.~Srinivasan
found the minimal free resolution and a formula for the Betti numbers of $K[H]$,  see  \cite{GSS}. By inspecting the two sets of formulas, it was noted in
\cite{Sha-Za-2} that $\beta_i (K[H]) = \beta_i (\gr_\mm K[H])$ for all $i$. By using Lemma \ref{lemma:crit}, we give a more conceptual proof
of this result in Proposition \ref{prop:arithmetic}.

The sequence of integers  $a_1,  a_2, \dots, a_r$ with $r\geq 2$ is called a   {\em generalized arithmetic sequence} if there exist integers $h$ and $d$  such that
$a_i = h a_1 + (i-1)d$ for $i=2, \dots, r$. By the work of L.~Sharifan and  R.~Zaare-Nahandi in \cite{Sha-Za-2},
one can show with  minor changes to our proofs that  all the results from  the rest of this section are   also valid for semigroups $H$
generated by generalized arithmetic sequences.
For simplicity, in what follows we only consider arithmetic sequences.

\medskip
We first present  our alternative proof  of the following proposition due to   L.~Sharifan and  R.~Zaare-Nahandi.

\begin{Proposition}
\label{prop:arithmetic}
Let $H $ be a numerical semigroup minimally generated by the arithmetic sequence $\ab= a_1 < a_2 < \dots < a_r$.
Then $\beta_i(I_H^*) = \beta_i (I_H) $ for all $i$.
\end{Proposition}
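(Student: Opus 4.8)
The plan is to verify the hypotheses of Lemma~\ref{lemma:crit} for the ideal $I = I_H$ in this special case, exploiting the fact that the generators of $I_H$ for an arithmetically generated semigroup are explicitly known. First I would recall from Patil's description (or equivalently from \cite{GSS}) that $I_H$ is minimally generated by a collection of binomials, among which one finds the $2\times 2$ minors of a certain catalecticant-type matrix built from $x_1,\dots,x_r$ together with finitely many extra binomials coming from the relation $a_r + \text{(something)} = \dots$ involving $a_1$. The key structural point I want to extract is that each minimal binomial generator $f$ of $I_H$ can be written so that its two monomials have distinct degrees (so that $f^* $ is simply the lower-degree monomial, whenever $f$ is inhomogeneous), \emph{except} for those that are already homogeneous, and that $x_1$ divides at most the higher-degree monomial of each such binomial. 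This is the arithmetic-sequence analogue of the normal form \eqref{form} in Theorem~\ref{thm:vu-equation}, but now available for \emph{every} $H$ in the family, not just for large shifts.

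Next I would check that $x_1$ is a nonzerodivisor on $\widehat S / I_H \widehat S = K[[H]]$: this is immediate because $K[[H]]$ is a one-dimensional domain (it is a subring of $K[[t]]$) and $x_1 \mapsto t^{a_1} \neq 0$, so $x_1$ is a parameter. Then I would compute $\bar I_H = \pi(I_H)$, where $\pi$ kills $x_1$. Using the explicit generators, $\bar I_H$ is generated by the images of the binomial generators; the homogeneous ones survive unchanged, while each inhomogeneous binomial $x_1^\alpha u - v x_r^\beta$ maps to the monomial $-v x_r^\beta$ (since $\alpha > 0$). Thus $\bar I_H$ is generated by monomials together with the homogeneous binomials, hence is generated by a standard basis consisting of these generators, each of which lifts to a generator of $I_H$ of the \emph{same} initial degree — precisely the hypothesis of Lemma~\ref{lemma:crit}. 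Concretely, for a homogeneous binomial generator $f$ we have $\nu(f) = \nu(\pi(f))$ trivially (as long as $\pi(f)\neq 0$, which holds because the two monomials of $f$ are not both divisible by $x_1$), and for an inhomogeneous generator $f = x_1^\alpha u - v x_r^\beta$ we have $\nu(f) = \deg(v x_r^\beta) = \nu(\pi(f))$ because $\deg(v x_r^\beta) < \deg(x_1^\alpha u)$. Granting this, Lemma~\ref{lemma:crit} gives that the chosen generators form a standard basis of $I_H$, that $x_1$ is regular on $\gr_\nn(S/I_H) = S/I_H^*$, and that \eqref{eq:iso} holds.

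Finally the Betti-number equality follows from exactly the chain of equalities in the proof of Theorem~\ref{thm:main}: $\beta_i(S/I_H) = \beta_i(\bar S/\bar I_H) = \beta_i(\gr_{\bar\nn}(\bar S/\bar I_H)) = \beta_i(\gr_\nn(S/I_H)/x_1\gr_\nn(S/I_H)) = \beta_i(\gr_\nn(S/I_H)) = \beta_i(S/I_H^*)$, where the first equality uses that $x_1$ is a nonzerodivisor on $K[H]$, the second that $\bar I_H$ is homogeneous, the third is \eqref{eq:iso}, the fourth uses regularity of $x_1$ on $\gr_\nn(S/I_H)$, and the last is the definition of $I_H^*$. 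Since $\beta_i(I_H) = \beta_i(S/I_H) + (\text{shift}) $ — more precisely $\beta_i(I_H) = \beta_{i+1}(S/I_H)$ — the desired equality $\beta_i(I_H^*) = \beta_i(I_H)$ drops out.

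\textbf{Main obstacle.} The real work is the first step: showing that \emph{every} minimal binomial generator of $I_H$, in the arithmetic case, can be chosen in the normal form where the two monomials have different degrees and $x_1$ divides only the higher-degree one (equivalently, that no minimal generator has $x_1$ dividing its initial form, and that $\pi$ does not collapse degrees). For the $2\times 2$ minors of the catalecticant matrix this is a direct check, but the "extra" generators of $I_H$ — the ones encoding the overlap relation that makes $H$ a genuine numerical semigroup rather than a scroll — require care; I would lean on the precise list of generators in \cite{Patil} and \cite{GSS} and verify the degree inequality case by case. A secondary subtlety is confirming that the images $\pi(f_i)$ are still a \emph{minimal} generating set of $\bar I_H$ (or at least a standard basis with the right count), so that the Betti numbers genuinely match rather than merely satisfy an inequality; this should follow from comparing the explicit resolution of \cite{GSS} with the Eagon–Northcott-type resolution of the monomial-plus-scroll ideal $\bar I_H$, or alternatively be absorbed into the already-known numerical coincidence of the two Betti tables that motivated the proposition.
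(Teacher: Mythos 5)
Your proposal is correct and follows essentially the same route as the paper: invoke the explicit Patil/Gimenez--Sengupta--Srinivasan generators of $I_H$, check that killing $x_1$ sends them to a (homogeneous, hence standard) generating set of $\bar I_H$ liftable with the same initial degrees, apply Lemma~\ref{lemma:crit}, and run the Betti-number chain from the proof of Theorem~\ref{thm:main}. The ``main obstacle'' you flag is discharged in the paper simply by writing down the generators --- the homogeneous minors $\xi_{ij}$ and the inhomogeneous $\Delta_i = x_r^a x_{b+i} - x_1^{a+d}x_i$, whose lower-degree monomial is visibly not divisible by $x_1$ --- and your secondary worry about minimality of $\pi(f_i)$ is moot, since the equality $\beta_i(S/I_H)=\beta_i(\bar S/\bar I_H)$ already follows from $x_1$ being a nonzerodivisor on $K[H]$.
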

\begin{proof}

If $r <3$, the statement is immediate.

Assume $r\geq 3$.
Let $a_i= a_1 + (i-1)d$ \ for $i=1, \dots, r$ and $d$ a positive integer.
Without loss of generality, we may assume that $\gcd(a_1, d)=1$. Otherwise, we divide the sequence $\ab$ by $\gcd(a_1, d)$   and we obtain
an arithmetic sequence $\bb$ with the desired property.
The semigroups $\langle \ab \rangle$ and $\langle \bb \rangle$ are isomorphic and so are their associated semigroup rings.

We describe the minimal system of generators of the ideal $I_H$ following the presentation in \cite{GSS}.

Let $a$ and $b$ be  the unique positive
integers such that $a_1= a (r-1) +b$  with $1\leq b \leq r-1$. Consider the following matrices of variables:
$$
A= \left(
\begin{matrix}
x_1  & \cdots & x_{r-1} \\
x_2 &\cdots & x_{r}
\end{matrix}
\right),
\quad
B= \left(
\begin{matrix}
x_r^a & x_1 & \dots  & x_{r-b} \\
x_1^{a+d} & x_{b+1} & \dots & x_r
\end{matrix}
\right).
$$

Let $\Delta_i$ be the maximal minor of $B$ involving the first and the $(i+1)${st} column for $i=1,\dots, r-b$,
$$
\Delta_{i}= \left| \begin{matrix}  x_r^a &  x_i \\  x_1^{a+d} & x_{b+i} \end{matrix} \right|  = x_r^a x_{b+i} - x_1^{a+d}x_i \quad \text{for }i=1,\dots, r-b.
$$

Let $\xi_{ij}= \left|\begin{matrix} x_i & x_j \\ x_{i+1} & x_{j+1}\end{matrix} \right|= x_ix_{j+1}- x_{i+1}x_j$ for $1\leq i < j \leq r-1$ be the maximal minors of $A$.
It is known from   \cite{Patil} and \cite[Theorem 1.1]{GSS1} that  $I_H$ is minimally generated by   the $\xi_{ij}$'s and the $\Delta_i$'s:
\begin{eqnarray}
\label{eq:mingen}
I_H= (\xi_{ij}: 1\leq i < j \leq r-1) + (\Delta_1, \dots,\Delta_{r-b}).
\end{eqnarray}

We claim that these generators  also form a standard basis of $I_H$.

Let $S=K[x_1, \dots, x_r]$ and    consider the substitution homomorphism $\pi$ defined on $S$
by $\pi(x_1)=0$ and $\pi(x_i)=x_i$ for $i>1$,  as in  Lemma \ref{lemma:crit}.
Note that the ideal $\pi(I_H)$ is homogeneous and   its generators coming from \eqref{eq:mingen}
can be lifted via $\pi$ to polynomials in $I_H$  with the same initial degree. Therefore, applying Lemma \ref{lemma:crit} we conclude that the generators of $I_H$ given in
\eqref{eq:mingen} form a standard basis.

We also have that $x_1$ is a regular element on $\gr_\mm K[H]$ and arguing as in the proof of Theorem \ref{thm:main}
we conclude that $\beta_i (I_H^*)= \beta_i(I_H)$ for all $i$.
\end{proof}

We recall here the  formula given in \cite{GSS} for the Betti numbers of $K[H]$.
\begin{Theorem}\;{\em(Gimenez, Sengupta and  Srinivasan \cite[Theorem  4.1]{GSS}) }
\label{thm:betti-arithmetic}

Let $H$ be a numerical semigroup generated minimally by the arithmetic sequence $a_1 <\dots < a_r$ with $\gcd(a_1, a_2)=1$.
Let $b$ the unique integer such that $a_1 \equiv b \mod (r-1)$ and $1\leq b \leq r-1$. Then
\begin{eqnarray}
\label{eq:betti}
\beta_i(K[H])= i { r-1 \choose i+1} +
\begin{cases}
(r- b-i +1) {r-1 \choose  i-1}  &\text{ if }  1 \leq i \leq r-b, \\
(i-r+ b) {r-1 \choose i} & \text{ if } r-b < i \leq r-1.
\end{cases}
\end{eqnarray}
\end{Theorem}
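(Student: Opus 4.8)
\emph{Plan.} The idea is to produce an explicit minimal graded free resolution of $K[H]=S/I_H$ over $S=K[x_1,\dots,x_r]$ and read off the ranks; the right--hand side of \eqref{eq:betti} will then appear as two visibly distinct pieces of that resolution. First I would dispose of the trivial cases $r\le 2$ and, exactly as in the proof of Proposition~\ref{prop:arithmetic}, reduce to $\gcd(a_1,a_2)=1$; since then $H$ is minimally generated by the $r$ terms of an arithmetic sequence, a short congruence argument (in the spirit of the proof of Lemma~\ref{lemma:mu-tilde}) forces $a_1\ge r$, hence $a\ge 1$ for the integer $a$ with $a_1=a(r-1)+b$. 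By Proposition~\ref{prop:arithmetic} the Betti numbers of $S/I_H$ agree with those of $S/I_H^{*}$, so it suffices to resolve $S/I_H^{*}$, and this is a bit more convenient because, by the standard basis \eqref{eq:mingen} and $d\ge 1$, one has
\[
I_H^{*}\;=\;I_2(A)\;+\;x_r^{\,a}\,(x_{b+1},x_{b+2},\dots,x_r),
\]
where $A=\left(\begin{smallmatrix}x_1&\cdots&x_{r-1}\\ x_2&\cdots&x_r\end{smallmatrix}\right)$, and $I_2(A)$ is the defining ideal of the rational normal curve of degree $r-1$ in $\PP^{r-1}$ (here $\xi_{ij}^{*}=\xi_{ij}$ and $\Delta_i^{*}=x_r^{a}x_{b+i}$).

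\emph{The Eagon--Northcott piece.} The quotient $S/I_2(A)$ is resolved by the Eagon--Northcott complex $\mathcal E=\mathrm{EN}(A)$, which is linear, minimal, of length $r-2$, with $\beta_i\bigl(S/I_2(A)\bigr)=i\binom{r-1}{i+1}$ and $\mathcal E_i$ generated in degree $i+1$ for $i\ge 1$. This is precisely the summand $i\binom{r-1}{i+1}$ of \eqref{eq:betti}; the remaining summand must therefore come from the extra generators $x_r^{a}x_{b+i}$. To extract it I would use the exact sequence
\[
0\longrightarrow \overline{\mathfrak c}\longrightarrow S/I_2(A)\longrightarrow S/I_H^{*}\longrightarrow 0,
\qquad \overline{\mathfrak c}=x_r^{\,a}\,(x_{b+1},\dots,x_r)\,(S/I_2(A)),
\]
and form the mapping cone of a comparison map from a free resolution of $\overline{\mathfrak c}$ to $\mathcal E$. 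Since $S/I_2(A)$ is a domain, $x_r$ is a nonzerodivisor on it, so multiplication by $x_r^{a}$ identifies $\overline{\mathfrak c}$ with $N(-a)$, where $N=(x_{b+1},\dots,x_r)\,(S/I_2(A))$; thus the task reduces to the minimal $S$-free resolution $\mathcal F$ of $N$. This is a routine determinantal computation: $N$ is the restriction to the rational normal curve of the ideal of its last $r-b$ coordinates — geometrically a twist of the ideal sheaf of the length-$b$ divisor at $[1:0]$ — and $\mathcal F$ may be obtained from the sequence $0\to N\to S/I_2(A)\to S/(I_2(A)+(x_{b+1},\dots,x_r))\to 0$ together with the Eagon--Northcott resolution of the smaller Hankel matrix, or directly as one of the generalized Eagon--Northcott (``scroll'') complexes attached to $A$. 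The outcome one aims for is that $\beta^{S}_{i-1}(N)$ equals $(r-b-i+1)\binom{r-1}{i-1}$ for $1\le i\le r-b$ and $(i-r+b)\binom{r-1}{i}$ for $r-b<i\le r-1$ — exactly the case-split term of \eqref{eq:betti}, with the crossover index $i=r-b$ being where the relevant scroll complex changes shape. Then $\beta_i(S/I_H^{*})=\beta_i(\mathcal E)+\beta_{i-1}(\mathcal F(-a))$ gives \eqref{eq:betti} (with $\beta_0=1$), and transporting back along Proposition~\ref{prop:arithmetic} yields the stated Betti numbers of $K[H]$.

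\emph{Main obstacle.} The serious point is minimality of the mapping cone: one must show the comparison map $\mathcal F(-a)\to\mathcal E$ lifting $\overline{\mathfrak c}\hookrightarrow S/I_2(A)$ can be chosen with all entries in $\mathfrak n=(x_1,\dots,x_r)$, so that no pair of free summands cancels. This is a bookkeeping of internal degrees: $\mathcal E$ is linear with $\mathcal E_i$ in degree $i+1$, while the generators and first syzygies of $N$ sit in degrees $1$ and $2$ (the linear syzygies of $N$ arising from those quadrics $\xi_{pq}\in I_2(A)$ that already lie in $(x_{b+1},\dots,x_r)$), so after the shift by $a\ge 1$ the comparison map lands in strictly positive degree in the crucial spots and no cancellation occurs — but this must be verified term by term, and it is where essentially all the work lies. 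An equivalent, more symmetric route — the one actually followed in \cite{GSS} — is to set up the entire resolution at once as a mapping cone between two generalized Eagon--Northcott complexes associated with $A$ (equivalently, to exploit that $I_H=I_2(A)+I_2(B)$, with $B$ as in the proof of Proposition~\ref{prop:arithmetic}, since the $2\times2$ minors of $B$ not involving its first column already lie in $I_2(A)$), and to verify acyclicity and minimality of the resulting complex directly with the Buchsbaum--Eisenbud criterion; in that picture the two ranges $i\le r-b$ and $i>r-b$ correspond transparently to the two complexes being spliced.
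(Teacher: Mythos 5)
There is nothing in the paper to compare your argument against: Theorem~\ref{thm:betti-arithmetic} is not proved here, it is quoted verbatim from \cite[Theorem 4.1]{GSS}. So the only question is whether your sketch would actually establish the formula. Its skeleton is sound and matches the route of \cite{GSS}: the reduction $I_H^{*}=I_2(A)+x_r^{a}(x_{b+1},\dots,x_r)$ is correct and non-circular (Proposition~\ref{prop:arithmetic} uses only the generating set \eqref{eq:mingen}, not the Betti formula), the congruence argument giving $a_1\ge r$ and hence $a\ge 1$ is right, the Eagon--Northcott count $i\binom{r-1}{i+1}$ for $S/I_2(A)$ is standard, and the identification $\overline{\mathfrak c}\cong N(-a)$ via the nonzerodivisor $x_r^{a}$ is fine. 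Minimality of the cone is in fact easier than you suggest: since $\mathcal E_j$ is generated in degree exactly $j+1$ for $j\ge 1$ while $\mathcal F_j(-a)$ is generated in degrees $\ge j+1+a$, every comparison map $\phi_j$ has entries of degree $\ge a\ge 1$ (and degree $a+1$ for $j=0$), so no cancellation can occur regardless of the finer structure of $\mathcal F$.

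The genuine gap is the resolution of $N=(x_{b+1},\dots,x_r)(S/I_2(A))$ itself: the entire case-split term of \eqref{eq:betti}, including the crossover at $i=r-b$, is exactly $\beta_{i-1}(N)$, and you present it only as ``the outcome one aims for.'' Deriving it (say from $0\to N\to S/I_2(A)\to S/(I_2(A')+(x_{b+1},\dots,x_r))\to 0$ with $A'$ the $2\times(b-1)$ submatrix in $x_1,\dots,x_b$, or from the scroll complexes of $A$) is where all the content of the theorem lives, and it is omitted. Moreover one of your supporting degree claims is false as stated: the first syzygies of $N$ need not be linear. For $r=3$, $b=2$ one has $N=x_3\cdot\bigl(S/(x_1x_3-x_2^2)\bigr)\cong \bigl(S/(x_1x_3-x_2^2)\bigr)(-1)$, whose unique first syzygy sits in internal degree $3$; there is no minor $\xi_{pq}$ lying in $(x_3)$ to produce a linear one. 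This slip happens not to endanger minimality (higher internal degree only helps), but it illustrates that the ``term by term'' verification you defer cannot be waved through. As it stands the proposal is a credible plan, not a proof.
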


\medskip
Note that from the Auslander-Buchsbaum Theorem it follows that $\projdim K[H]=r-1$, therefore \eqref{eq:betti} displays
all non-zero Betti numbers $\beta_i(K[H])$ for $i>0$.

It is surprising that according to \eqref{eq:betti}, the Betti numbers of  a semigroup ring $K[H]$ associated to an arithmetic sequence
$a_1 <\dots <a_r$ do not depend on $d=a_2-a_1$, but only on the number of minimal generators  $r=\mu(H)$ and the residue $a_1 \mod(r-1)$.


By using Theorem \ref{thm:betti-arithmetic} we obtain the following result which will be crucial for our further consideration.
\begin{Proposition}
\label{prop:betti-bounds}
Let $H$ be a numerical semigroup minimally generated by the arithmetic sequence  $a_1 <\dots < a_r$.
Then
\begin{eqnarray}
\label{eq:betti-bounds}
i {r-1 \choose i+1} < \beta_i(K[H]) \leq i {r \choose i+1} \quad \text{ for all \ } 1\leq i \leq r-1.
\end{eqnarray}
Moreover, if we let $e=\gcd(a_1, a_2)$, the following statements are equivalent:
\begin{enumerate}
\item[(i)] $\beta_i(K[H]) = i {r \choose i+1}$ for some $i$ with $1\leq i \leq r-1$,
\item[(ii)] $\beta_i(K[H]) = i {r \choose i+1}$ for all $i$ with $1\leq i \leq r-1$,
\item[(iii)]  $a_1 \equiv e \mod e(r-1)$.
\end{enumerate}
\end{Proposition}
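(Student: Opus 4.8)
The plan is to reduce the whole statement to elementary manipulations of the closed formula for $\beta_i(K[H])$ recorded in Theorem~\ref{thm:betti-arithmetic}. As a first step I would remove the coprimality hypothesis of that theorem: if $e=\gcd(a_1,a_2)$, then $e$ divides the common difference $d=a_2-a_1$ as well, so $a_1/e<\dots<a_r/e$ is again an arithmetic sequence, it minimally generates a semigroup whose ring is isomorphic to $K[H]$ as a graded $K$-algebra, and $\gcd(a_1/e,a_2/e)=1$. Hence $\beta_i(K[H])$ is unchanged, and we may assume $\gcd(a_1,a_2)=1$; under this reduction condition (iii) becomes $a_1\equiv 1\pmod{r-1}$, i.e.\ $b=1$, where $b$ denotes the residue of $a_1$ modulo $r-1$ with $1\le b\le r-1$. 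Write $\beta_i(K[H])=i\binom{r-1}{i+1}+\gamma_i$, where $\gamma_i$ is the case-dependent summand appearing in \eqref{eq:betti}.

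By Pascal's identity $\binom{r}{i+1}=\binom{r-1}{i+1}+\binom{r-1}{i}$, the double inequality \eqref{eq:betti-bounds} is exactly the assertion $0<\gamma_i\le i\binom{r-1}{i}$ for $1\le i\le r-1$. Positivity is read off directly: in the range $1\le i\le r-b$ one has $r-b-i+1\ge 1$, in the range $r-b<i\le r-1$ one has $i-r+b\ge 1$, and the binomial coefficients involved are all positive. For the upper bound I would use the elementary identity $i\binom{r-1}{i}=(r-i)\binom{r-1}{i-1}$: in the first range $\gamma_i=(r-b-i+1)\binom{r-1}{i-1}\le (r-i)\binom{r-1}{i-1}=i\binom{r-1}{i}$ because $b\ge 1$, with equality if and only if $b=1$; in the second range $\gamma_i=(i-r+b)\binom{r-1}{i}\le i\binom{r-1}{i}$ because $b\le r-1<r$, and here the inequality is always strict.

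The equivalence of (i)--(iii) then follows by tracking these equality cases. We already have (iii)$\iff b=1$. If $b=1$ then $r-b=r-1$, so the first branch of \eqref{eq:betti} governs every $i$ with $1\le i\le r-1$, and the equality case above yields $\beta_i(K[H])=i\binom{r}{i+1}$ for all such $i$; this proves $b=1\Rightarrow$(ii), and (ii)$\Rightarrow$(i) is trivial since $r\ge 2$. Conversely, assume (i), say $\gamma_i=i\binom{r-1}{i}$ for some $i\in\{1,\dots,r-1\}$. Such an $i$ cannot lie in the second branch of \eqref{eq:betti}, where the inequality $\gamma_i\le i\binom{r-1}{i}$ is strict; hence $1\le i\le r-b$ and the equality forces $b=1$, i.e.\ (iii). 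This closes the cycle (i)$\Rightarrow$(iii)$\Rightarrow$(ii)$\Rightarrow$(i).

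I do not anticipate a genuine obstacle: once Theorem~\ref{thm:betti-arithmetic} is in hand the argument is mechanical. The only points that call for a little care are the bookkeeping at the boundary index $i=r-b$ (which belongs to the first branch of \eqref{eq:betti}, so that $b=1$ indeed makes the first branch cover all of $1\le i\le r-1$) and checking that the reduction to $\gcd(a_1,a_2)=1$ transports condition (iii) correctly, namely that $a_1/e\equiv 1\pmod{r-1}$ is equivalent to $a_1\equiv e\pmod{e(r-1)}$.
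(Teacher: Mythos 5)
Your proposal is correct and follows essentially the same route as the paper: reduce to $\gcd(a_1,a_2)=1$ by dividing the sequence through by $e$ (which leaves the Betti numbers unchanged and turns (iii) into $b=1$), then combine Pascal's identity with $(r-i){r-1 \choose i-1}=i{r-1 \choose i}$ to compare \eqref{eq:betti} with $i{r \choose i+1}$ and observe that equality in either branch forces $b=1$, whence the first branch covers all $i$. The paper packages this as rewriting $\beta_i(K[H])=i{r \choose i+1}$ minus a nonnegative correction term, while you bound the second summand $\gamma_i$ directly, but the two computations are identical.
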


\begin{proof}
We first consider the case $e=1$. Let $b$ the unique integer such that $a_1 \equiv b \ \mod (r-1)$ and $1\leq b \leq r-1$.
The first inequality in \eqref{eq:betti-bounds} is trivial, as the second summand in the Betti-formula \eqref{eq:betti} is in either case positive.
By using Pascal's formula
\begin{eqnarray*}
{r \choose i+i} = {r-1 \choose i+1} + {r-1 \choose i}
\end{eqnarray*}
and the identity $(r-i) {r-1 \choose  i-1} = i {r-1 \choose i}$, we may rewrite \eqref{eq:betti} as follows:
\begin{eqnarray}
\label{eq:betti-new}
\beta_i(K[H])= i { r\choose i+1} -
\begin{cases}
(b-1) {r-1 \choose  i-1}  &\text{ if }  1 \leq i \leq r-b, \\
(r-b) {r-1 \choose i} & \text{ if } r-b < i \leq r-1.
\end{cases}
\end{eqnarray}

By our choice of $b$ we have that $b-1 \geq 0$ and $r-b >0$. This leads immediately to the inequality
\begin{eqnarray*}
\label{eq:something}
\beta_i(K[H]) \leq i {r \choose i+1}.
\end{eqnarray*}
If for some $i$ with $1\leq i \leq r-1$  one has that $\beta_i(K[H]) = i {r \choose i+1}$, then $b=1$.
Therefore,  the second case in the Betti-formula \eqref{eq:betti-new} does not apply  and $\beta_i(K[H]) = i {r \choose i+1}$  for all $i>0$.

If $e>1$, then we let $H'$ be the semigroup obtained from $H$ by dividing all generators of $H$ by $e$.
Since $\beta_i(K[H']) =\beta_i(K[H])$ for all $i$, the inequalities \eqref{eq:betti-bounds} follow from the case $e=1$.
Moreover, the desired equivalences  follow from the first part of this proof
and the observation that  $a_1 \equiv e \mod e(r-1)$ if and only if $b_1 \equiv  1 \mod (r-1)$, where we let $b_1= a_1/e$.
\end{proof}

Our next result shows that a more general form of Conjecture \ref{conj:one} is true for semigroups generated by an arithmetic sequence.

\begin{Proposition}
\label{prop:consecutive}
Let $H$ be a numerical semigroup generated by an arithmetic sequence. Then
\begin{eqnarray}
\label{eq:betti-width}
\beta_i(\gr_\mm K[H]) \leq i { \width(H) + 1 \choose i+1} \text{ for all }  i >0.
\end{eqnarray}
Equality holds for some $i$ with $1\leq i < \mu(H)$ if and only if there exist integers  $w,k \geq 1$ such that
$$
H= \langle kw+1, kw+2, \dots,  (k+1)w+1 \rangle \quad \text{with} \quad w, k \geq 1.
$$
In this case \eqref{eq:betti-width} becomes an equality for all $i$ with $1\leq i <\mu(H)$.
\end{Proposition}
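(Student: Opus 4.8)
The plan is to reduce the statement to the Betti numbers of $K[H]$ itself and then read everything off from Proposition~\ref{prop:betti-bounds}. Passing to the minimal generating set of $H$ (which is again an arithmetic sequence), I would assume $H$ is minimally generated by $a_1<\dots<a_r$ with $a_i=a_1+(i-1)d$, $d=a_2-a_1\geq 1$, so that $r=\mu(H)$ and $\width(H)=a_r-a_1=(r-1)d$. By Proposition~\ref{prop:arithmetic} one has $\beta_i(\gr_\mm K[H])=\beta_i(K[H])$ for all $i$, so \eqref{eq:betti-width} is equivalent to $\beta_i(K[H])\leq i\binom{(r-1)d+1}{i+1}$. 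For $i\geq r$ the left-hand side vanishes (by Auslander--Buchsbaum, $\projdim K[H]=r-1$), so the bound is trivial, and I would restrict to $1\leq i\leq r-1$.

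For $1\leq i\leq r-1$, Proposition~\ref{prop:betti-bounds} already gives $\beta_i(K[H])\leq i\binom{r}{i+1}$. Since $d\geq 1$ we have $r\leq (r-1)d+1$, and since $i+1\leq r$ the binomial coefficient $\binom{n}{i+1}$ is non-decreasing in $n$ on the integers $n\geq i+1$; hence $i\binom{r}{i+1}\leq i\binom{(r-1)d+1}{i+1}$. Chaining the two inequalities proves \eqref{eq:betti-width}.

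For the equality assertion I would assume $r\geq 2$ (otherwise the range $1\leq i<\mu(H)$ is empty) and ask when the two inequalities above are simultaneously equalities. Equality in the second, $\binom{r}{i+1}=\binom{(r-1)d+1}{i+1}$, together with $i\geq 1$ and the fact that both upper indices are at least $i+1$, forces $r=(r-1)d+1$ by strict monotonicity of $n\mapsto\binom{n}{i+1}$, i.e.\ $(r-1)(d-1)=0$, hence $d=1$; in particular $\gcd(a_1,a_2)=\gcd(a_1,a_1+1)=1$. Equality $\beta_i(K[H])=i\binom{r}{i+1}$, combined with the equivalence of (i) and (iii) in Proposition~\ref{prop:betti-bounds}, forces $a_1\equiv 1\pmod{r-1}$; and by the equivalence of (i) and (ii) the equality in \eqref{eq:betti-width} then holds for every $i$ with $1\leq i\leq r-1=\width(H)$, i.e.\ for all $1\leq i<\mu(H)$. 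It remains to recognise the family: $d=1$ gives $H=\langle a_1,a_1+1,\dots,a_1+r-1\rangle$, and minimality of this generating set forces the largest generator to be smaller than $2a_1$, i.e.\ $a_1\geq r$; writing $w=r-1=\width(H)$ and $a_1=kw+1$ (using $a_1\equiv 1\pmod w$), the inequality $a_1\geq w+1$ gives $k\geq 1$, so $H=\langle kw+1,kw+2,\dots,(k+1)w+1\rangle$. Conversely, for $H$ of this form with $w,k\geq 1$ I would check that the $w+1$ consecutive integers from $kw+1$ to $(k+1)w+1$ minimally generate it (the largest is less than $2(kw+1)$ because $w(k-1)+1>0$), so $r=w+1$, $\width(H)=w$ and $a_1\equiv 1\pmod{r-1}$; Proposition~\ref{prop:betti-bounds} then yields $\beta_i(K[H])=i\binom{w+1}{i+1}=i\binom{\width(H)+1}{i+1}$ for all $1\leq i\leq w$.

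I do not expect a serious obstacle once Propositions~\ref{prop:arithmetic} and~\ref{prop:betti-bounds} are available; the care needed is entirely combinatorial — correctly invoking the (strict) monotonicity of $\binom{n}{i+1}$ in $n$, which requires noting that the relevant upper indices are all $\geq i+1$, and checking that the conditions ``$d=1$, $a_1\equiv 1\pmod{r-1}$, and minimality of the generating set'' describe exactly the family $\langle kw+1,\dots,(k+1)w+1\rangle$, in particular that the constraint $k\geq 1$ is both necessary and sufficient.
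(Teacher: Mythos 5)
Your proposal is correct and follows essentially the same route as the paper: reduce to $\beta_i(K[H])$ via Proposition~\ref{prop:arithmetic}, chain the bound $i\binom{r}{i+1}$ from Proposition~\ref{prop:betti-bounds} with the monotonicity of $\binom{n}{i+1}$ in $n$ (equality forcing $d=1$), and then use the equivalence of (i)--(iii) in Proposition~\ref{prop:betti-bounds} to characterize the equality case. Your write-up is merely more explicit than the paper's at a few points, e.g.\ in justifying $k\geq 1$ via minimality of the generating set and in verifying the converse direction.
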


\begin{proof}
Let $\ab = a_1 < a_2< \dots <a_r$ be the arithmetic sequence that minimally generates $H$.
By using Proposition \ref{prop:arithmetic} and Proposition \ref{prop:betti-bounds} it follows that
$$
\beta_i(\gr_\mm K[H]) \leq i{ r \choose i+1} \leq i { \width(H) +1 \choose i+1}.
$$
The second inequality becomes equality (independently of $i$) if and only if $a_2-a_1=1$.
Thus, by using Proposition \ref{prop:betti-bounds} it follows that $\beta_i(\gr_\mm K[H]) = i { \width(H) +1 \choose i+1} $ if and only if  $a_1 \equiv 1 \mod (r-1)$.

If we let $w=\width(H)$, then there exists a positive integer $k$ with $a_1=kw+1$. Hence $a_r= a_1+ (r-1)d= kw+1+w$.
This completes the proof.
\end{proof}

\medskip
The following statement shows that the  Betti numbers of semigroup rings associated to arithmetic sequences increase
with the number of terms in the sequence.

\begin{Proposition}
\label{prop:betti-easy-prop}
Let $H$ and $H'$ be numerical semigroups generated by arithmetic sequences such that $\mu(H) < \mu(H')$.
Then  $\beta_i(K[H]) < \beta_i(K[H'])$ for all $i$ such that $0<i  < \mu(H')$.
\end{Proposition}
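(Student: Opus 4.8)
The plan is to read the claim off directly from the two-sided estimate in Proposition~\ref{prop:betti-bounds}. The key observation is that the \emph{upper} bound it provides for a semigroup with $r$ minimal generators, namely $i\binom{r}{i+1}$, is at most the \emph{strict lower} bound $i\binom{r'-1}{i+1}$ it provides for a semigroup with $r'\ge r+1$ minimal generators. Since $\mu(H')\ge \mu(H)+1$, this is precisely the situation at hand, and the desired strict inequality drops out.

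Concretely, put $r=\mu(H)$ and $r'=\mu(H')$, so that $r\le r'-1$, and fix $i$ with $1\le i\le r'-1$. I would establish the chain
\begin{equation*}
\beta_i(K[H]) \;\le\; i\binom{r}{i+1} \;\le\; i\binom{r'-1}{i+1} \;<\; \beta_i(K[H']).
\end{equation*}
For the first inequality: when $1\le i\le r-1$ it is exactly the upper bound of Proposition~\ref{prop:betti-bounds}; when $r\le i\le r'-1$ we have $\projdim K[H]=r-1<i$ (Auslander--Buchsbaum, as recorded after Theorem~\ref{thm:betti-arithmetic}), so $\beta_i(K[H])=0$, while the right-hand side also vanishes since $i+1>r$. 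The middle inequality follows from $r\le r'-1$ together with the monotonicity $\binom{n}{i+1}\le\binom{n+1}{i+1}$ (Pascal), after multiplying through by $i\ge 1$. The last inequality is the strict lower bound of Proposition~\ref{prop:betti-bounds} applied to $H'$, which is valid precisely on the range $1\le i\le r'-1$. Concatenating the three relations gives $\beta_i(K[H])<\beta_i(K[H'])$, as desired.

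There is essentially no obstacle here. The only minor points to keep in mind are that the upper bound of Proposition~\ref{prop:betti-bounds} must be extended past homological degree $r-1$ — harmless, since $\beta_i(K[H])$ vanishes there — and that Proposition~\ref{prop:betti-bounds} carries no coprimality hypothesis, so no reduction to the case $\gcd(a_1,a_2)=1$ is needed. One could equally well phrase the argument by first disposing of the range $r\le i\le r'-1$, where $\beta_i(K[H])=0<\beta_i(K[H'])$, and then treating $1\le i\le r-1$ via the displayed chain.
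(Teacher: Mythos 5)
Your proof is correct and takes essentially the same route as the paper's, which displays exactly the chain $\beta_i(K[H]) \le i\binom{\mu(H)}{i+1} \le i\binom{\mu(H')-1}{i+1} < \beta_i(K[H'])$ drawn from Proposition~\ref{prop:betti-bounds}. The only difference is that you explicitly handle the range $\mu(H)\le i<\mu(H')$, where $\beta_i(K[H])=0$ by Auslander--Buchsbaum and the binomial coefficient vanishes -- a detail the paper leaves implicit.
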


\begin{proof}

By using Proposition \ref{prop:betti-bounds} we have that
$$
\beta_i(K[H]) \leq i {\mu(H) \choose i+1} \leq i {\mu(H') -1 \choose i+1} < \beta_i (K[H']).
$$
This implies the desired conclusion.
\end{proof}

The final result in this section shows that a stronger version of Conjecture \ref{conj:two} is valid for semigroups generated by an arithmetic sequence.
This stronger version may be even true for any numerical semigroup.

\begin{Proposition}
\label{prop:conj2-arithmetic}
Let $H$ be a numerical semigroup generated by an arithmetic sequence.
Then $\beta_i(I_H^*) \leq  \beta_i(I_{\widetilde{H}}^*)$ for all $i$.  The equality is achieved for all $i$ if and only if
$H$ is generated by consecutive positive integers, or if $H=\langle r, r+d, \dots, r+ (r-1)d \rangle$ for some
positive integers $r$ and $d$ with $\gcd(r,d)=1$ and $r>2$.
 \end{Proposition}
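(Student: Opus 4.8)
The plan is to reduce the statement to a comparison of the ordinary Betti numbers of the two semigroup rings, and then to feed it into the explicit formulas available in the arithmetic case.

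Write $a_i=a_1+(i-1)d$. After dividing the sequence by $\gcd(a_1,d)$ we may assume $\gcd(a_1,d)=1$, since this replaces $H$ and $\widetilde H$ by isomorphic semigroups and hence changes no Betti number. By Lemma~\ref{lemma:Htilde}, $\widetilde H$ is minimally generated by the consecutive integers $a_1,a_1+1,\dots,a_1+\width(\widetilde H)$, that is, by an arithmetic sequence of common difference $1$; thus Proposition~\ref{prop:arithmetic} applies to both $H$ and $\widetilde H$ and gives $\beta_i(I_H^*)=\beta_i(I_H)$ and $\beta_i(I_{\widetilde H}^*)=\beta_i(I_{\widetilde H})$ for all $i$. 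Via the index shift $\beta_i(I)=\beta_{i+1}(S/I)$, the inequality to be proved is therefore equivalent to $\beta_j(K[H])\le\beta_j(K[\widetilde H])$ for all $j$, and likewise for the equality claim. I will also use that $\mu(H)\le\mu(\widetilde H)$ by Lemma~\ref{lemma:mu-tilde} and that $\projdim K[H]=\mu(H)-1$ by Auslander--Buchsbaum. The cases $\mu(H)\le 2$ are checked directly, so assume $\mu(H)\ge 3$.

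Now I would split according to whether $\mu(H)<\mu(\widetilde H)$ or $\mu(H)=\mu(\widetilde H)=:r$. In the first case both rings are defined by arithmetic sequences, so Proposition~\ref{prop:betti-easy-prop} gives $\beta_j(K[H])<\beta_j(K[\widetilde H])$ for $0<j<\mu(\widetilde H)$; for $j\ge\mu(\widetilde H)$ one has $j>\mu(H)-1=\projdim K[H]$, hence $\beta_j(K[H])=0$; and $\beta_0=1$ for both. So the inequality holds and is strict at $j=1$, hence equality for all $i$ fails here. In the second case the discussion following Lemma~\ref{lemma:mu-tilde} forces either $H=\widetilde H$ --- and then $H$ is generated by consecutive integers and the equality is trivial --- or $a_1=r$ and $d\ge 2$, so that $H=\langle r,r+d,\dots,r+(r-1)d\rangle$ with $\gcd(r,d)=1$ and $\widetilde H=\langle r,r+1,\dots,2r-1\rangle$. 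In this remaining situation I apply Proposition~\ref{prop:betti-bounds}: the relevant gcd for $\widetilde H$ is $\gcd(r,r+1)=1$, and its congruence condition $r\equiv 1\pmod{r-1}$ holds automatically, so $\beta_j(K[\widetilde H])=j\,{r\choose j+1}$ for all $1\le j\le r-1$; on the other hand the same proposition bounds $\beta_j(K[H])\le j\,{r\choose j+1}$, with equality for all $j$ exactly when $a_1=r\equiv 1\pmod{r-1}$ (here again the gcd is $\gcd(r,d)=1$), which holds as well. Hence $\beta_j(K[H])=\beta_j(K[\widetilde H])$ for all $j$. Putting the two cases together gives the inequality in general, and shows that equality holds for all $i$ precisely when $\mu(H)=\mu(\widetilde H)$, i.e.\ for exactly the two families listed.

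The crux --- and essentially the only real obstacle --- is the case $\mu(H)=\mu(\widetilde H)$, where the crude count of Proposition~\ref{prop:betti-easy-prop} is not available and one must use the sharp bounds of Proposition~\ref{prop:betti-bounds}. The point to notice is that the interval completion $\widetilde H$ always sits at the extremal end of those bounds: when $\mu(H)=\mu(\widetilde H)$ and $H\ne\widetilde H$ one is pinned down to $\widetilde H=\langle r,\dots,2r-1\rangle$, for which the congruence condition in Proposition~\ref{prop:betti-bounds} is automatic; and because the smallest generator of $H$ is then also $r$, the same congruence is inherited by $H$, forcing $H$ to be extremal too. Everything else is bookkeeping with Auslander--Buchsbaum vanishing and the shift between the Betti numbers of an ideal and those of its cyclic quotient.
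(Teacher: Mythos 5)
Your proof is correct and follows essentially the same route as the paper's: reduce to the ordinary Betti numbers via Proposition~\ref{prop:arithmetic}, get the (strict) inequality from Lemma~\ref{lemma:mu-tilde} and Proposition~\ref{prop:betti-easy-prop}, and settle the equality case by showing it forces $\mu(H)=\mu(\widetilde{H})$ and hence, via the discussion preceding Conjecture~\ref{conj:two}, one of the two listed forms. The one step the paper carries out that you pass over is the verification that for $r>2$, $d>1$ with $\gcd(r,d)=1$ the numbers $r, r+d,\dots,r+(r-1)d$ really are a \emph{minimal} generating set --- this is needed for the ``if'' direction of the equality characterization, since your appeal to Proposition~\ref{prop:betti-bounds} (equivalently, to the fact that the Betti numbers depend only on $\mu(H)$ and $a_1 \bmod (\mu(H)-1)$) presupposes $\mu(H)=r$.
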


\begin{proof}
By Proposition \ref{prop:arithmetic} it suffices to show that
\begin{eqnarray*}
\beta_i({I_H})\leq \beta_i({I_{\widetilde{H}}}) \text{ for all } i.
\end{eqnarray*}
These inequalities are  an immediate consequence of Lemma \ref{lemma:mu-tilde} and Proposition~\ref{prop:betti-easy-prop}.

Any of these inequalities turn into  an equality if and only if $\mu(H) = \mu(\widetilde{H})$.
 This is obviously true when  $H$ is (minimally) generated by some consecutive numbers, i.e. $H=\widetilde{H}$.
If that is not the case, by the discussion before Conjecture \ref{conj:two} it follows that
$$
H= \langle r, r+d, \dots, r+ (r-1)d \rangle
$$
with $\mu(H)=r$ and such that its $r$ minimal generators give different remainders $\mod r$, equivalently
that $\gcd(r,d)=1$.

We claim that for any $r>2$ and $d>1$ such that $\gcd(r,d)=1$ the numbers $r, r+d, \dots, r+ (r-1)d$ are a minimal
generating set for the semigroup they span. Indeed, suppose there exists an integer $i$ with $0<i \leq r-1$
and integers $a_j\geq 0$ such that
$$
r+id= \sum_{j=0}^{i-1}a_j(r+jd).
$$
It follows that
$$
(\sum_{j=0}^{i-1}a_j-1)r= (i- \sum_{j=0}^{i-1}j a_j)d.
$$
Since at least one $a_j>0$, it follows that 
$\sum_{j=0}^{i-1}a_j-1 \geq 0$. This implies that $r> i\geq i-\sum_{j=0}^{i-1}j a_j \geq 0$.

On the other hand, since $\gcd(r,d)=1$, it follows that $r$ divides $i- \sum_{j=0}^{i-1}j a_j$. This is only possible if
$i-\sum_{j=0}^{i-1} j a_j =0$, in which case $\sum_{j=0}^{i-1}a_j-1 = 0$.
This implies that precisely only one of the $a_j=1$ while the others are zero. 
Thus $i-j= i- \sum_{j=0}^{i-1}j a_j =0$, a contradiction.
 
\end{proof}


\section{Examples}
\label{sec:examples}

In this section we study the defining ideals of the tangent cone of semigroup rings for  several families of numerical semigroups
and compare their number of generators with  our conjectured upper bound in   Conjecture~\ref{conj:one}.
The first two families are due to J.~Sally \cite{Sally} and H.~Bresinsky \cite{Bres}.  The last two families are families of three generated semigroups,
one of which has been considered by T.~Shibuta in \cite{Goto}.

\medskip
\noindent
\subsection{\em Sally semigroups.} In \cite{Sally} Judith Sally considered  Gorenstein local rings whose multiplicity is small compared to the embedding dimension of the ring and gave explicitly a minimal set of generators of the defining ideal of the tangent cone.

To be precise, let $(R,\mm)$ be a Gorenstein local ring of dimension $d$, embedding dimension $r$ and multiplicity $e>4$ such that $r=e+d-3$. Her Theorem 3 says the following:
assume  further that $R/\mm$  is infinite and that $R$ has a
presentation, $R = S/I$, where $S$ is a regular local ring of dimension $e + d- 3$. Then
\[
\gr_\mm R \iso R/\mm[x_1,\ldots, x_d, y, z_1,\ldots, z_{e - 4}]/I^*,
\]
where $I^*$  is minimally generated by the ${e-2\choose 2}$ monomials
\[
yz_1,\ldots,yz_{e-4},\ z_iz_j,  1\leq i\leq j\leq e-4, \text{ and $y^4$}.
\]

There are plenty of numerical semigroups whose complete semigroup ring is a Gorenstein ring with $r=e+d-3 =e-2$. A numerical semigroup with this property will be called a  {\em Sally semigroup}.

It would be interesting to find all Sally semigroups. For any given $e\geq 4$  we give  an example of a Sally semigroup $S_e$ of multiplicity $e$. Let
\[
S_e= \langle i \: e \leq i \leq 2e-1, i\neq e+2, e+3\rangle.
\]
Obviously, $\mu(S_e)=e-2$. Moreover, $S_e$ is symmetric because its  Frobenius number is equal to $2e+3$  and
\[
S_e=\{0,e,e+1, e+4,\ldots, 2e+2, 2e+4, 2e+5,\ldots\}.
\]
Hence,  by a theorem of Kunz \cite{Kunz},  it follows that $K[[S_e]]$ is Gorenstein.

By the above mentioned theorem of Sally we have $\mu(I_{S_e}^*)={e-2\choose 2}$. Our conjectured upper  bound in this particular case is ${e\choose 2}$.

More generally, if $H$ is any Sally semigroup of multiplicity $e$, then   it verifies Conjecture \ref{conj:one}:
\[
\mu(I_H^*)= {e-2\choose 2}={r\choose 2}\leq {\width(H)+1\choose 2}.
\]

\medskip
\noindent
\subsection{\em Bresinsky semigroups.} In 1975\ H.\ Bresinsky \cite{Bres} introduced the following family   of $4$-generated numerical semigroups.
Given an  integer $h\geq 2$ we let
 $$
B_h=\langle (2h-1)2h, (2h-1)(2h+1), 2h(2h+1), 2h(2h+1)+2h-1\rangle.
$$
We claim that  $\mu(I_{B_h}^*)=\mu(I_{B_h})= 4h$ and that $\gr_\mm K[B_h]$ is Cohen-Macaulay.

\medskip
Fix $h\geq 2$ and let $I=I_{B_h} \subset K[x,y,z,t]$ be the relation ideal of $K[B_h]$.
We will give  a minimal standard basis of $I$ with $4h$ elements. It is proved in \cite[Lemma 3]{Bres} that
$I  = (\mathcal{A}_1 \cup \{g_1= z^{2h-1}-y^{2h}, g_2= xt-yz\} \cup \mathcal{A}_2)$, where
$$ \mathcal{A}_1 = \{ f_i = z^{i-1}t^{2h-i}- y^{2h-i}x^{i+1} \:\ 1\leq i \leq 2h\} $$
and
$$
\mathcal{A}_2 = \{f=x^{\nu_1}z^{\nu_3}-y^{\mu_2}t^{\mu_4} \: \  \nu_3, \mu_4 < 2h-1, f\in \ I \}.
$$

The set $\mathcal{A}_2$ is infinite, so we consider a finite subset, namely
$$
\mathcal{A}_3=   \{f=x^{\nu_1}z^{\nu_3}-y^{\mu_2}t^{\mu_4} \: f\in \mathcal{A}_2, \mu_2 \leq 2h \}.
$$
We claim that $\mathcal{A}_2$ and $\mathcal{A}_3$ generate the same ideal in $K[x,y,z,t]$.
Pick $f=x^{\nu_1}z^{\nu_3}-y^{\mu_2}t^{\mu_4}$ in $\mathcal{A}_2$ with $\mu_2 >2h$.  We show that $f \in   (\mathcal{A}_3)$.
Indeed, write $\mu_2= 2h\cdot \alpha + \beta$, with $\alpha$, $\beta$ integers and
$0\leq \beta < 2h$. We may rewrite
\begin{eqnarray*}
f &=& x^{\nu_1}z^{\nu_3}-y^{\mu_2}t^{\mu_4} = x^{\nu_1}z^{\nu_3}-y^{ 2h \cdot \alpha + \beta}t^{\mu_4} \\
 &=& x^{\nu_1}z^{\nu_3}-y^{\beta}t^{\mu_4}((y^{2h})^\alpha - (x^{2h+1})^\alpha)- x^{(2h+1)\alpha}y^{\beta}t^{\mu_4}.
\end{eqnarray*}
Since $(y^{2h})^\alpha - (x^{2h+1})^\alpha \in (\mathcal{A}_3)$, it suffices to show that $g= x^{\nu_1}z^{\nu_3} - x^{(2h+1)\alpha}y^{\beta}t^{\mu_4} \in (\mathcal{A}_3) $.
We may assume $g\neq 0$.
 If $\nu_1 \leq (2h+1) \alpha$, since $I$ is a prime binomial ideal, we should also have
$0\neq z^{\nu_3}- x^{\alpha'} y^{\beta}t^{\mu_4} \in I$ for some nonnegative integer $\alpha'$.
 This is not possible because $\nu_3 <2h-1$ and by \cite[Lemma 1]{Bres}, we must have $\nu_3 \geq 2h-1$.
Therefore $\nu_1 > (2h+1) \alpha$ and  we may write $g= x^{(2h+1)\alpha}( x^{\gamma}z^{\nu_3}-y^{\beta}t^{\mu_4}) $ with $\gamma$ a nonnegative integer.
It follows that $g\in (\mathcal{A}_3)$, as desired.

As a consequence of the above $I=(\mathcal{A}_1, g_1, g_2, \mathcal{A}_3)$.
Next we  will find explicitly the polynomials in $\mathcal{A}_3$.
Let $f=x^{\nu_1}z^{\nu_3}-y^{\mu_2}t^{\mu_4}  \in \mathcal{A}_3$.
Then
\begin{eqnarray*}
\label{eq:bres}
 \nu_1 2h(2h-1) + \nu_3 2h(2h+1) = \mu_2 (2h-1)(2h+1)+ \mu_4 (2h(2h+1) +2h-1).
\end{eqnarray*}
Reducing this equation modulo $2h$, we see  that $2h$ divides $\mu_2 + \mu_4$. Since $\mu_4 < 2h-1$ and $\mu_2 \leq 2h$, we obtain that $\mu_2 + \mu_4 = 2h$.
By using this fact and dividing both sides of the above equation  by $2h$ we get
\begin{eqnarray*}
\nu_1 (2h-1) + \nu_3 (2h+1) = \mu_2 \cdot 2h+ \mu_4(2h+2)-1= 4h^2-1+2\mu_4.
\end{eqnarray*}
It follows that $ \nu_3\equiv \mu_4 \mod (2h-1)$. Since $\nu_3, \mu_4 < 2h-1$, we obtain that $\nu_3 = \mu_4$.
This implies that $\nu_1+ \nu_3 = 2h+1$. Thus,
$$
\mathcal{A}_3= \{ u_j= x^{2h+1-j}z^j-y^{2h-j}t^j \:\ 0 \leq j \leq 2h-2 \}.
$$
Note that $f_{2h} + u_0= z^{2h-1}-y^{2h}=g_1$.
We claim that the following set of $4h$ elements
$$
\mathcal{B}= \{f_i \: \ 1\leq i \leq 2h\} \cup \{g_2 \} \cup \{ u_j \: \ 0\leq j \leq 2h-2 \}
$$
is a minimal generating set and a minimal standard basis of $I$.

Indeed, with notation as in Lemma \ref{lemma:crit}, we observe that
$$
\bar{I}= ( t^{2h-1}, zt^{2h-2}, \dots, z^{2h-1})+(yz) + (y^{2h},  y^{2h-1}t, \dots,  y^3t^{2h-3}, y^2t^{2h-2}),
$$
which happens to be a  monomial ideal. Since $x$ is regular on $K[B_h]$ we  have that $\mu(I)= \mu(\bar{I})= 4h$.
The monomial generators of   $\bar{I}$, which form a standard basis, admit  the lifting property required in Lemma \ref{lemma:crit}.
Therefore  $\mathcal{B}$ is a standard basis of $I$ and
$$
I^*= (t^{2h-1}, zt^{2h-2}, \dots, z^{2h-1})+ (xt-yz) + (y^{2h},  y^{2h-1}t, \dots,  y^3t^{2h-3}, y^2t^{2h-2}).
$$

As a consequence of Lemma \ref{lemma:crit} we have that the tangent cone $\gr_\mm K[B_h]$ is Cohen-Macaulay.
This was also announced by F.\ Arslan in \cite[Remark 3.8 (b)]{Arslan}.

\bigskip
Before considering families of $3$-generated semigroups, we  first recall how one determines the relation ideal of such semigroups.
Let $H=\langle n_1, n_2, n_3 \rangle$,
where $n_1 <n_2 <n_3$ are the minimal generators. According to the original paper \cite{He-semi},
the defining ideal $I_H$ of the semigroup algebra $K[H]\cong K[x,y,z]/I_H $ is generated by $2$ or $3$ elements which can
be easily found  from $n_1, n_2, n_3$.
For each $n_i$, $i=1,\dots, 3$ one takes the least positive multiple $c_i n_i$ that lies in the semigroup generated by the other two generators, and obtains
\begin{eqnarray}
\label{eq:3gen-additive}
\left\{
\begin{matrix}
c_1 n_1 = r_{12} n_2 + r_{13}n_3, \\ 
c_2 n_2 = r_{21} n_1 + r_{23}n_3, \\
c_3 n_3= r_{31} n_1 + r_{32}n_2.  
\end{matrix}
\right.
\end{eqnarray}

Given this data, the ideal $I$ is   generated by
\begin{eqnarray}
\label{eq:3gen}
f_1=x^{c_1}-y^{r_{12}}z^{r_{13}}, f_2=y^{c_2}-x^{r_{21}}z^{r_{23}}, f_3=z^{c_3}-x^{r_{31}}y^{r_{32}}.
\end{eqnarray}

Note that some $r_{ij}$ may be zero. In this situation two of the above polynomials are the same up to a sign
and the other coefficients $r_{st}$ are not necessarily unique.
If all $r_{ij} >0$, then all coefficients  are unique and
\begin{eqnarray}
\label{eq:sum}
c_1= r_{21}+r_{31}, \quad
c_2= r_{12} + r_{32}, \quad
c_3= r_{13}+ r_{23}.
\end{eqnarray}

\medskip
\noindent
\subsection{\em Variations on Shibuta semigroups.}
Let $a>3$ be an integer  and let $H_a= \langle a, a+1, 2a+3 \rangle$. (For $a=3k$ with  $k= 2,3\ldots$, this is the Shibuta family.)
We claim that  $\mu(I^*_{H_a}) = \lfloor \frac{a-1}{3} \rfloor +3$.

In this family  $\mu(I^*_{H_a})$ is a quasi-linear function of $a$ while our conjectured upper bound is a polynomial of degree $2$ in $a$.

\medskip
We indicate a proof of this claim.  To simplify notation, we  set $H=H_a$. We will describe a minimal standard basis of $I_{H}$.
The result and the proof  depend on $a \mod 3$.

Let $a=3k+1$, with $k>1$. Then $H= \langle 3k+1, 3k+2, 6k+5 \rangle$.
Any two of its generators are coprime, hence by \cite{He-semi}, $K[H]$ is not a complete intersection, and so
  all $r_{ij}$'s  in \eqref{eq:3gen-additive} are positive and unique.
First we prove that $I_H= (f_0, g, p)$, where
$$
f_0=yz^k-x^{2k+2}, \quad g=xz-y^3, \quad p=z^{k+1}-x^{2k+1}y^2.
$$
Clearly $f_0, g, p \in I_H$.  Therefore $c_2 \leq 3$. With notation as in \eqref{eq:3gen-additive}, if $c_2=2$, then the equation
$2(3k+2)= \alpha (3k+1) + \beta (6k+5)$ must have  a solution  with positive $\alpha$ and $\beta$, which is impossible.
Therefore, $c_2=3$ and $r_{21}=r_{23}=1$.
We reduce modulo $3k+1$ the  equation
$$
c_3 \cdot (6k+5)= r_{31} \cdot (3k+1) + r_{32} \cdot (3k+2)
$$
and we obtain
$$
3 c_3 \equiv r_{32} \ \mod (3k+1), \quad \text{where } c_3 \leq k+1.
$$
From \eqref{eq:sum} we derive that $0< r_{32} <3$. Hence the only possibility is to have $c_3=k+1$ and
$r_{32}=2$, $r_{31}=2k+1$. It follows by \eqref{eq:sum} and \eqref{eq:3gen} that $I_H=(f_0, g, p)$.

Next we build a minimal standard basis of $I_H$.
We recursively define a family of polynomials in $I_H$ by the rule:
$$
f_i = x f_{i-1} - y^{3i-1}z^{k-i}g, \quad \text{for  $i=1, \dots, k$}.
$$
One checks by induction on $i$ that $f_i= y^{3i+1}z^{k-i}-x^{2k+i+2}$ with  $f_i^{*}=y^{3i+1}z^{k-i}$ for  $i=0, \dots, k$ .

We show that $\mathcal{B} =\{g, p, f_0, f_1, \dots, f_k \}$ is a standard basis of $I_H$.
In order to prove this, we will use the homogenization technique described in \cite[\S15.10.3]{Eis}.
Namely, starting with the generating set $\mathcal{B}$ for $I_H$ we consider
the ideal $J$ in $K[s,x,y,z]$ generated by the homogenizations $q^h$
of the elements   $q\in \mathcal B$.
Next we find a Gr\"obner basis $\mathcal{G}$ of $J$ with respect to the lexicographic order
induced  by $s>x>y>z$. Then we dehomogenize the elements in
$\mathcal{G}$ and their initial forms will generate (not necessarily in a minimal way) the ideal $I_{H}^*$.

Let $f_{k+1}= xz f_{k}^h - y^{3k+1}g^h = y^{3k+4}-x^{3k+3}z \in J$. We claim that
$$
\mathcal{G} =\{ g^h, p^h,  f_{0}^h, f_{1}^h, \dots, f_{k}^h, f_{k+1} \}
$$
is a Gr\"obner basis for $J$ with respect to   the lexicographical order induced by $s>x>y>z$.
It is routine to check that all $S$-polynomials of pairs of elements in $\mathcal{G}$ reduce to $0$ with respect to $\mathcal{G}$.
Hence by Buchberger's Criterion (see for example \cite[Theorem 2.14]{EH} or \cite[Theorem 15.8] {Eis}) we conclude that $\mathcal{G}$ is a Gr\"obner basis of $J$.

We apply the algorithm described before and after we dehomogenize the elements in $\mathcal{G}$ we see that
$$
 I_H^*= ( xz, z^{k+1}, yz^k, y^4z^{k-1}, \dots, y^{3k+1})
$$
and $\mu(I^*_H)= k+3$.
(Note that $f_{k+1}^*= y^{3k+4}-x^{3k+3}z$  is already in the ideal generated by the other initial forms.)

\medskip
For the other two cases $a \equiv 0, 2 \mod 3 $ there is a similar discussion.
If $a=3k+2$ with $k>1$, then $H= \langle 3k+2, 3k+3, 6k+7 \rangle$, and
 $I_H= (f_0, g, p)$, where
$$
f_0=y^2z^k-x^{3k+3}, \quad g=xz-y^3, \quad p=z^{k+1}-x^{2k+1}y^2.
$$
We introduce recursively $f_i= x f_{i-1} - y^{2+3(i-1)}z^{k-i}g$, for $i=1, \dots, k$. Then by induction  on $i$ one shows that
$f_i= y^{3i+2}z^{k-i}- x^{2k+i+3} \in I_H$ and $f_{i}^*= y^{3i+2}z^{k-i}$ for $i=0, \dots, k$.
Then $\mathcal{C}=\{ g, p, f_0, \dots, f_k\}$ is a minimal standard basis of $I_H$ and
$$
I_H^*=(xz, z^{k+1}, y^2z^k, y^5z^{k-1}, \dots, y^{3k-1}z,y^{3k+2}).
$$
To see this, we homogenize the polynomials in $\mathcal{C}$, and check that together with $f_{k+1}= xzf_k-y^{3k+2}g$
they form a Gr\"obner basis with respect to the lexicographic order induced by $s>x>y>z$ for the ideal they generate.

\medskip
The last case   to consider is when $a=3k$, with $k\geq 2$. Then $H=\langle 3k, 3k+1, 3k+3 \rangle$.
This case is Shibuta's case in \cite[Example 5.5]{Goto}, where it was treated with tools different from the ones presented here.
For completeness we state here the relevant details. The semigroup ring $K[H]$ is a complete intersection and  $I_H= (f_0, g)$, where we let $f_0=z^k-x^{2k+1}$ and $g= xz-y^3$.
For $i=1,\dots, k$ we set  $f_i= x f_{i-1}- y^{3(i-1)}z^{k-i}g$. Then
$f_i = y^{3i}z^{k-i}-x^{2k+i+1} \in I_H$ and $f_i^*= y^{3i}z^{k-i}$ for $i=0, \dots, k$.
Finally one shows  that $\mathcal{D}= \{ g, f_0, f_1, \dots, f_k\}$ is a minimal standard basis of $I_H$, so that
$$
I_H^*= ( xz, z^k, y^3z^{k-1}, \dots, y^{3k} ).
$$
To show this, the  same technique as before works here. We homogenize the elements in $\mathcal{D}$, and together with
the polynomial $f_{k+1}=xzf_k -y^{3k}g$, these $k+3$ polynomials form  a Gr\"obner basis for the ideal they generate.
After dehomogenization we may discard $f_{k+1}$ from the standard basis since it has no contribution to $I^*_{H}$.
In all cases $\mu ( I_{H_a}^*)= \lfloor \frac{a-1}{3} \rfloor +3 $. This completes the proof.

\medskip
\noindent
\subsection{\em Frobenius semigroups.} Let $a, b >3 $ be coprime integers, and  consider the semigroup $H_{a,b}= \langle a, b, ab-a-b \rangle$. We call it a Frobenius semigroup because it is obtained from  the symmetric semigroup $\langle a,b\rangle$ by adding the last gap number of $\langle a,b\rangle$, called the Frobenius number, as an additional generator
to   $\langle a,b \rangle$  to obtain $H_{a,b}$. We claim that $\mu (I^*_{H_{a,b}}) =4$.

\medskip
This result is in so  far remarkable and untypical compared with the previous examples as the width of $H_{a,b}$ may be as large as we wish, while $\mu (I^*_{H_{a,b}})$ is always equal to $4$.

\medskip
In order to prove the claim we may assume without loss of generality  $a<b$. To simplify notation, we let $H= H_{a,b}$.
It is well known that $ab-a-b$ is the Frobenius number of the semigroup $\langle a, b\rangle$. This means that
$ab-a-b \notin \langle a, b\rangle$ and that if $s$ is an integer such that $s>ab-a-b$,  then $s\in H$, see \cite{Ramirez}.
Therefore $a <b <ab-a-b$ minimally generate $H$.

Let $I_H \subset S=K[x,y,z]$ be the relation ideal of the semigroup ring $K[H]$. Let
$$
f_1=x^{b-1}-yz, \quad f_2=y^{a-1}-xz, \quad f_3=z^2-x^{b-2}y^{a-2}.
$$
Clearly $f_1,f_2,f_3 \in  I_H$. We claim that these $f_i$'s correspond to  the minimal generators described in \eqref{eq:3gen}.
Indeed, as any two minimal generators of $H$ are coprime, it follows that $K[H]$ is not a complete intersection (see \cite{He-semi}).
Hence all the $r_{ij}$'s in \eqref{eq:3gen-additive} are positive and unique.  We have $c_3=2, r_{31}= b-2$ and $r_{32}=a-2$.
It follows from \eqref{eq:sum} that $c_1 > b-2$ and $c_2 > a-2$. Hence $f_1, f_2, f_3$ minimally generate $I_H$.

Let $f_4= x f_1 -y f_2 =x^b- y^a \in I_H$.  We claim that $\{f_1, f_2, f_3, f_4\}$ is a standard basis of $I_H$.
Let $J=(f_{1}^*, f_{2}^*, f_{3}^*,f_{4}^*)= (yz, xz, z^2, y^a) \subset I_{H}^*$.
By comparing their Hilbert series we show that $J=I_{H}^*$. This will then prove the claim.

For a finitely generated graded $S$-module $M=\bigoplus_{i \geq 0}M_i$ we consider its Hilbert series
$\Hilb_M(t)= \sum_{i\geq 0} \dim_K M_i \ t^i$.

Since $ J:z = (x,y,z)$, we obtain the exact sequence
\begin{eqnarray*}
0  \rightarrow ((x,y,z)/J) (-1) \rightarrow S/J(-1) \stackrel{z}{\rightarrow}  S/J \rightarrow K[x,y]/(y^a) \rightarrow 0.
\end{eqnarray*}
By using the additivity of the Hilbert series on exact sequences we get that
$$
\Hilb_{S/J}(t)= t+ \Hilb_{K[x,y]/(y^a)}(t).
$$

Let $\mm=(t^a, t^b, t^{ab-a-b}) \subset K[H]$ and  $\mm_1=(t^a, t^b) \subset K[\langle a, b\rangle]$.

Let $i \geq 2$. We show that for any positive integer $s$,
\begin{eqnarray}
\label{eq:iff}
t^s \in \mm^i \iff t^s \in \mm_{1}^i.
\end{eqnarray}

The implication ``$\Leftarrow$" is always satisfied.

For the converse, pick $t^s \in \mm^i$. Then  $s\neq ab-a-b$, since $i\geq 2$.
If $s< ab-a-b$, then the generator $t^{ab-a-b}$ does not appear in any presentation of $t^s$ as a product
of the monomial generators of $\mm$. Therefore, $t^s \in \mm_{1}^i$.

Finally suppose that $s > ab-a-b $. Then the desired implication follows from the fact that  for any decomposition
\begin{eqnarray*}
\quad s=\alpha a + \beta b + \gamma (ab-a-b)  \text{ with $\alpha, \beta, \gamma \in \ZZ_{+}, \ \alpha+\beta+\gamma \geq 2$ and $\gamma \geq 1$},
\end{eqnarray*}
there exist   $\alpha',\beta'\in \ZZ_{+}$ such that
\begin{eqnarray*}
s=\alpha a + \beta b + \gamma (ab-a-b)= \alpha' a + \beta' b   \ \text{ and } \alpha+\beta+\gamma < \alpha' + \beta'.
\end{eqnarray*}

Indeed, this can be deduced from the following identities:
$$
\begin{aligned}
 a+(ab-a-b) &= (a-1) b, \\
 b + (ab-a-b)&= (b-1)  a, \\
 2(ab-a-b)  &= (b-1) a + (a-2)\cdot b.
\end{aligned}
$$

A consequence of \eqref{eq:iff} is that the Hilbert series of the  tangent cones of  $K[H]$,
respectively of $K[\langle a, b \rangle]$ are the same  except in degree $i=1$, hence
\begin{eqnarray*}
\Hilb_{\gr_\mm K[H]} (t) &=& t+ \Hilb_{\gr_{\mm_1} K[\langle a,b \rangle]} (t) = t+ \Hilb_{ K[x,y]/(x^b-y^a)^*}(t) \\
 &=& t+ \Hilb_{K[x,y]/(y^a)} (t) =  \Hilb_{S/J}(t).
\end{eqnarray*}
Therefore $I_{H}^*=J$ and $\mu(I_{H}^*)=4$.

\medskip
{\bf Acknowledgement}.
The use of CoCoA \cite{Cocoa} and SINGULAR \cite{Sing} was vital for the development of this paper. We wish to thank their respective teams of developers.
We thank Mihai Cipu and the anonymous referee for reading the manuscript with a careful eye and for their suggestions.

The second author was  supported by  a grant of the
Romanian Ministry of Education, CNCS--UEFISCDI, project number PN-II-RU-PD-2012-3--0656.
He wishes to thank the  Fachbereich Mathematik, Universit\"at   Duisburg-Essen for the warm environment that fostered the collaboration of the authors.

\medskip
{}

\end{document}